\newtheorem{theorem}{Theorem}[section]
\newtheorem{lemma}[theorem]{Lemma}
\newtheorem{corollary}{Corollary}[theorem]
\theoremstyle{definition}
\newtheorem{definition}[theorem]{Definition}
\newtheorem{example}[theorem]{Example}
\theoremstyle{remark}
\DeclareMathOperator{\Perm}{Perm}
\numberwithin{equation}{section}
\begin{document}

\title{Operadic approach to HNN-extensions of Leibniz Algebras}


\author{Georg Klein}
\address{Departamento de Matem\'atica,
Universidade Federal da Bahia,
Av. Adhemar de Barros, S/N, Ondina, CEP: 40.170.110, Salvador, BA, BRAZIL}
\email{georgklein53@gmail.com}

\author{Chia Zargeh}
\address{Department of Mathematics and Computer Science, Modern College of Business and Science, Muscat, Sultanate of Oman}
\email{chia.zargeh@mcbs.edu.om}

\subjclass[2010]{05E15, 17A32, 17A99, 18D50}


\date{}

\keywords{operad, Leibniz algebras, HNN-extensions, Gr\"obner-Shirshov basis, di-algebra}

\begin{abstract}
We construct HNN-extensions of Lie di-algebras in the variety of di-algebras and
provide a presentation for the replicated HNN-extension of a Lie di-algebras. Then, by applying
the method of Gr\"obner-Shirshov bases for replicated algebras, we obtain a linear basis. As
an application of HNN-extensions, we prove that Lie di-algebras are embedded in their HNN-
extension. 
\end{abstract}

\maketitle



.

\section*{Introduction}
The Higman-Neuman-Neumann extension (HNN-extension) of a group was originally introduced in \cite{H1}, and has been used for the proof of a known embedding theorem, that every countable group is embeddable into a group with two generators. For a group $G$ with an isomorphism $\phi$ between two of its subgroups $A$ and $B$,  $H$ is an extension of $G$ with an element $t \in H$, such that $t^{-1}at= \phi(a)$ for every $a \in A$. The group $H$ is presented by  $$H=\langle G,t \mid t^{-1}at=\phi (a), a\in A \rangle$$
and it implies that $G$ is embedded in $H$. The  concept  of  HNN-extension  was  constructed  for  (restricted) Lie algebras in independent works by Lichtman and Shirvani [4] and Wasserman [19], and it has recently been extended to generalized versions of Lie algebras, namely, Leibniz  algebras,  Lie  superalgebras  and  Hom-setting  of  Lie  algebras in  \cite{L1}, \cite{L2} and \cite{S3}, respectively. As an application of HNN-extensions, Wasserman in \cite{W1} obtained some analogous results to group theory and proved that Markov properties of finitely presented Lie algebras are undecidable. Moreover, HNN-extensions are used to prove an essential theorem concerning embeddability of algebraic structures into two-generated ones.  
Ladra et al.~in \cite{L2} used the Composition-Diamond Lemma for di-algebras in order to prove that every dialgebra embeds into its HNN-extension, then by the PBW theorem they transferred their results to the case of Leibniz algebras. The Composition-Diamond Lemma (CD-Lemma, for short) is the key ingredient of Gr\"obner-Shirshov bases theory and it is a powerful tool in combinatorial algebra to solve various problems such as the normal form, the word problem, extensions and embedding theorems.  The theory of Gr\"obner-Shirshov bases is parallel to the theory of Gr\"obner bases and was introduced for ideals of free (commutative, anti-commutative) non associative algebras, free Lie algebras and simplicity free associative algebras by Shirshov (see \cite{B3}, \cite{S1} and \cite{S2}). It has been transferred  to different algebraic structures in the last twenty years. The reader interested in Gr\"obner-Shirshov bases and its applications is encouraged to study the recently published book by Bokut et al. \cite{B4} for a comprehensive account.
\\As there is no Gr\"obner-Shirshov basis for Leibniz algebras, Kolesnikov in \cite{G2} and \cite{K1} introduced a replication procedure based on operads in order to fill this gap. Accordingly, given the variety of Lie algebras governed by an operad $\mathcal{P}$, its replicated variety is denoted by di-Lie and it is governed by the Hadamard product of operads $\mathcal{P}{e}{r}{m}\otimes \mathcal{P}$ where $\mathcal{P}{e}{r}{m}$ is the operad corresponding to the variety of $\mathcal{P}{e}{r}{m}$-algebras. In this paper, we use Kolesnikov's approach in order to provide another version of HNN-extensions of Leibniz algebras in terms of operads. Leibniz algebras are non-antisymmetric generalization of Lie algebras introduced by Bloh \cite{B2} and Loday \cite{L4},\cite{L5}, and they have many applications in both pure and applied mathematics and in physics.  Because of this,  many known results of the theory of Lie algebras as well as combinatorial group theory have been extended to Leibniz algebras in the last two decades (see, for instance,  \cite{B1}, \cite{L1} and \cite{Z1}.)

This paper is organized as follows. In the first section of this paper, we recall the notions of operad and replicated algebras. In the second section, the theory of Gr\"obner-Shirshov bases for Lie algebras will be recalled. Then we implement the replication procedure to define Gr\"obner-Shirshov bases for HNN-extensions of Lie di-algebras (Leibniz algebras) in the third section. 
\section{General overview on operads and replicated algebras}
Operads were first introduced in algebraic topology. Recently, the theory of operads has experienced further developments in several directions and has been used to investigate complicated algebraic structures. For our purpose, operads based on the category of vector spaces play an important role in the replication procedure to determine Gr\"obner-Shirshov bases. 
In this section, we recall fundamental concepts of operads from the algebraic point of view. For a detailed explanation  of the notion of operads, the reader is referred to \cite{L7}. There are several definitions of operads. For instance, the monoidal definition of operads and the combinatorial definition. The first one is based on the concept of  monoidal category of $S$-modules and the latter is given by rooted trees for symmetric operads and by planar rooted trees for nonsymmetric operads. We provide the monoidal definition of operads in what follows.

Let $\mathcal{P}$ be a variety of algebras defined by polynomial identities (for instance, associative algebra, Lie algebra, etc...) and  denote by $\mathcal{P}(V)$ the free algebra over the finite-dimensional vector space $V$. $\mathcal{P}$ can be considered a functor from the category of finite-dimensional vector spaces  $Vect$ to itself. In addition, the map $V \to \mathcal{P}(V)$ gives a natural transformation $Id \to \mathcal{P}$. By Schur's lemma,  $\mathcal{P}(V)$ is of the form 
\[ \mathcal{P}(V)= \oplus_{n \geq 1} \mathcal{P}(n) \otimes_{S_{n}} V^{\otimes n},\]
for some right $S_{n}$-module $\mathcal{P}(n)$, where $S_{n}$ is the permutation group. By applying the universal property of the free algebra to $Id: \mathcal{P}(V) \to \mathcal{P}(V)$, one obtains a natural map $\mathcal{P}(\mathcal{P}(V)) \to \mathcal{P}(V)$. This map is a transformation of functors $\gamma: \mathcal{P} \circ \mathcal{P} \to \mathcal{P}$, which is associative with unit. 
\begin{definition}
An algebraic operad is a functor $\mathcal{P}: Vect \to Vect,$ such that $\mathcal{P}(0)=0$, equipped with a natural transformation of functors $ \mathcal{P} \circ \mathcal{P} \to  \mathcal{P}$ which is associative with unit $1: Id \to \mathcal{P}$.
\end{definition}
For two operads $\mathcal{P}_{1}$ and $\mathcal{P}_{2}$, a morphism from  $\mathcal{P}_{1}$ to $\mathcal{P}_{2}$ is a family $f=\{f_{n}\}_{n \geq 1}$ of linear maps $ f_{n}:\mathcal{P}_{1}(n) \to \mathcal{P}_{2}(n)$ for $n \geq 1$ preserving the composition rule and the identity. 

Let us consider $\mathcal{P}(V)$ and $\mathcal{P}\circ \mathcal{P}(V)$ in terms of vector spaces of the $n$-ary operations defined on a type of algebra denoted by $\mathcal{P}(n): V^{\otimes n} \to V $. Then the operation $\gamma$ is defined by the linear maps
\[ \mathcal{P}(n) \otimes \mathcal{P}(i_{1}) \otimes \cdots \otimes \mathcal{P}(i_n) \to \mathcal{P}(i_{1}+ \cdots + i_{n}), \]
and an algebra of type $\mathcal{P}$ is defined by $\mathcal{P} \otimes_{S_{n}} V^{\otimes n} \to V$. The family of $S_{n}$-modules $\{ \mathcal{P}(n)\}_{n \geq 1}$ is called an $S$-module. The left adjoint functor exists and gives rise to the free operad over an $S$-module.

Quadratic operads govern varieties of binary algebras defined by multilinear identities of degree $2$ and $3$. In what follows, we intend to describe quadratic operads $\mathcal{L}{i}{e}$ and $\mathcal{P}{e}{r}{m}$, which are governing the variety of Lie algebras and $\Perm$-algebras (see~\cite{Z2} for $\Perm$-algebras), respectively.  Indeed, by considering a space of binary operations denoted by $E$ and the space of relations $R$, a quadratic operad can be defined. 
\begin{definition}\cite{G3}
A binary operad is an operad generated by operations on two variables. More explicitly, Let $E$ be an $S_{2}$-module (module of generating operators) and let $\mathcal{F}(E)$ be the free operad on the $S$-module $(0,E,0,\dots )$ where the action of $S_{2}$ on $E \otimes E$ is on the second factor only. In other words, the action of $(12) \in S_{2}$ on $E \otimes E$ is given by $id \otimes (12)$. A binary operad is quadratic if it is the quotient of $\mathcal{F}(E)$ by the ideal generated by some $S_{3}$-submodule $R$ of $\mathcal{F}(E)(3)$ 
\[ \mathcal{F}E(3)=\mathbb{K}S_{3} \otimes_{S_{2}} ( E \otimes E), \] 
being $\mathcal{F}(E)(3)$ is the space of all the operations on $3$ variables which can be performed out of the operations of $2$ variables.
\end{definition}

\begin{example}\label{Lieoperad}\cite{G3}
The space $E$ of binary operations (Lie bracket) is considered as $S_{2}$-module. If $\mu$ is an element of $E$ representing a  Lie bracket as $(x_{1},x_{2}) \mapsto [x_{1},x_{2}]$ then $\mu^{(12)}$ corresponds to $(x_{1},x_{2}) \mapsto [x_{2},x_{1}]$ and $\mu^{(21)} = - \mu$. The space of multilinear term of degree $3$ is identified with 
$ \mathcal{F}E(3)=\mathbb{K}S_{3} \otimes_{S_{2}} ( E \otimes E)$ defined as above. We have $\mu \otimes \mu \in E \otimes E$ and the elements of $E(3)$ are described as:
\[
\begin{tabular}{|l|c|}
\hline
$\mathbb{K}S_{3} \otimes_{S_{2}} ( E \otimes E)$ & Appropriate Monomial \\
\hline
$1 \otimes_{S_{{2}}}
(\mu \otimes \mu)$ & $[[x_{1},x_{2}],x_{3}]$ \\
$1 \otimes_{S_{{2}}}
(\mu \otimes \mu^{(12)})$ & $[[x_{2},x_{1}],x_{3}]$ \\ 
$1 \otimes_{S_{{2}}}
(\mu^{(12)} \otimes \mu)$ & $[x_{3},[x_{1},x_{2}]]$ \\
$1 \otimes_{S_{{2}}}
(\mu^{(12)} \otimes \mu^{(12)})$ & $[x_{3},[x_{2},x_{1}]]$ \\ 
\hline
$(13) \otimes_{S_{{2}}}
(\mu \otimes \mu)$  & $[[x_{3},x_{2}],x_{1}]$ \\
$(13) \otimes_{S_{{2}}}
(\mu \otimes \mu^{(12)})$ & $[[x_{2},x_{3}],x_{1}]$\\
$(13) \otimes_{S_{{2}}}
(\mu^{(12)} \otimes \mu)$  & $[x_{1},[x_{3},x_{2}]]$\\
$(13) \otimes_{S_{{2}}}
(\mu^{(12)} \otimes \mu^{(12)})$ & $[x_{1},[x_{2},x_{3}]]$ \\

\hline
$(23) \otimes_{S_{{2}}}
(\mu \otimes \mu)$ & $[[x_{1},x_{3}],x_{2}]$ \\
$(23) \otimes_{S_{{2}}}
(\mu \otimes \mu^{(12)})$ & $[[x_{3},x_{1}],x_{2}]$  \\ 
$(23) \otimes_{S_{{2}}}
(\mu^{(12)} \otimes \mu)$ &$[x_{2},[x_{1},x_{3}]]$\\
$(23) \otimes_{S_{{2}}}
(\mu^{(12)} \otimes \mu^{(12)})$ &  $[x_{2},[x_{3},x_{1}]]$\\ 

\hline
\end{tabular}
\]
The operad $\mathcal{L}{i}{e}$ is the quotient $\mathcal{L}{i}{e}= E_{Lie} / R_{Lie}$, where $E_{Lie}$ is the space of Lie bracket and $R_{Lie}$ is the $S_{3}$-submodule of $E\otimes E$ generated by the Jacobiator which is the following sum 
\[1 \otimes_{S_{{2}}} (\mu \otimes \mu) + (13) \otimes_{S_{{2}}}
(\mu \otimes \mu^{(12)}) + (23) \otimes_{S_{{2}}}
(\mu \otimes \mu^{(12)}) =0.\]
\end{example}
In the next example, the operad governing the variety of associative algebras satisfying the identity $(x_{1}x_{2})x_{3}=(x_{2}x_{1})x_{3}$ is introduced. It is called the $\mathcal{P}{e}{r}{m}$ operad. For more details see \cite{G2}. 
\begin{example}\label{permoperad}
Let $\mathcal{P}{e}{r}{m}$ be the variety of associative algebras satisfying the  identity \[(x_{1}x_{2})x_{3}=(x_{2}x_{1})x_{3}.\]
Let $e_{i}^{(n)}=(x_{1} \cdots x_{i-1}x_{i+1}\cdots x_{n})x_{i},$ $i=1,\ldots,n$. Then $\{e_{1}^{(n)}, \ldots , e_{n}^{(n)} \}$ is a standard basis. Define the composition  and the action of the symmetric group $S_{n}$ as follows:
\[ \gamma_{m_{1},\ldots,m_{n}} (e_{i}^{(n)} \otimes e_{j_{1}}^{(m_{1})} \otimes \cdots \otimes e_{j_{n}}^{(m_{n})}) = e_{m_{1}+ \cdots + m_{n-1}+j_{i}}^{(m)}\]
and 
\[ \sigma : e_{i}^{(n)} \mapsto e_{i \sigma}^{(n)} \]
where $m_{1}+\dots+m_{n} = m$. This yields a symmetric operad denoted by $\mathcal{P}{e}{r}{m}$. 
\end{example}
Given a variety which is governed by the operad $\mathcal{V}{a}{r}$, its replicated variety is denoted by di-$\mathcal{V}{a}{r}$ and governed by the following Hadamard product of operads
\[ \text{di}\text{-} \mathcal{V}{a}{r}=\mathcal{V}{a}{r} \otimes \mathcal{P}{e}{r}{m} .\]
The Hadamard tensor product has a natural operad structure and the composition maps are expanded componentwise. A permutation $\sigma \in S_{n}$ acts on the Hadamard product as $\sigma \otimes \sigma$.  For $A \in \mathcal{V}{a}{r}$, $P \in \mathcal{P}{e}{r}{m}$, the tensor product  $A \otimes P$ equipped with the operations 
\begin{equation}\label{eq:1}
f_{i}(x_{1} \otimes a_{1}, \ldots , x_{n} \otimes a_{n})= f(a_{1},\ldots , a_{n}) \otimes e_{i}^{(n)}(x_{1} , \ldots , x_{n}) ,
\end{equation} 
\[ f \in \Sigma \text{, }  \nu(f)=n \text{, }  x_{i} \in P  \text{, }  a_{i} \in A  \text{, }  i=1,\ldots,n\]
belongs to the variety di-$\mathcal{V}{a}{r}$. In general, finding the generators and defining relations of the Hadamard product is difficult. However, if $\mathcal{V}{a}{r}$ is a binary quadratic operad, i.e. an operad corresponding to a variety whose defining identities have degrees $2$ or $3$, the Hadamard tensor product $\mathcal{V}{a}{r} \otimes \mathcal{P}{e}{r}{m}$ coincides with the Manin white product and is denoted by $\mathcal{V}{a}{r} \circ \mathcal{P}{e}{r}{m},$ \cite{G3}. Accordingly, it is proved that $\mathcal{L}{e}{i}{b}=\mathcal{L}{i}{e} \circ \mathcal{P}{e}{r}{m}$, where $\mathcal{L}{e}{i}{b}$ is the operad governing the variety of Leibniz algebras. 
\\In order to clarify the operations  (\ref{eq:1}), we construct $[x,y] \otimes e_{1}^{(2)}$, where $e_{1}^{(2)} \in \mathcal{P}{e}{r}{m}(2)$. We have 
\begin{align*}
    [x,y]\otimes e_{1}^{(2)}&= xy \otimes e_{1}^{(2)} - yx \otimes e_{1}^{(2)}\\
                            &= xy \otimes e_{1}^{(2)} + ((xy) \otimes e_{2}^{(2)})^{(12)}
\end{align*}
then the Leibniz multiplication is defined as 
   \[ [x \dashv y] = x \dashv y - y \vdash x,\]
which implies the right Leibniz algebra. By computing $[x,y] \otimes e_{1}^{(2)}$, the left Leibniz algebra will be obtained in a similar way.

Kolesnikov \cite{K1} provided a construction of the free tri-$\mathcal{V}{a}{r} \langle X \rangle$ generated by a given set $X$ in the variety tri-$\mathcal{V}{a}{r}$ and obtained essential results. In what follows, we obtain analogous results in the  di-$\mathcal{V}{a}{r}$ setting.

\subsection*{Replication of variety of Lie algebras \cite{G2,K1}}
Kolesnikov~\cite{K1} provided a construction of the free tri-$\mathcal{V}{a}{r} \langle X \rangle$ generated by a given set $X$ in the variety tri-$\mathcal{V}{a}{r}$ and obtained essential results. In what follows, analogous results in the  di-$\mathcal{V}{a}{r}$ setting will be provided in accordance with~\cite{G2} and~\cite{K1}.
Let $(\Sigma , \nu)$ be a set of operations together with an arity function $\nu: \Sigma \to \mathbb{Z}_{+}$ which is called a \textit{language}. A $\Sigma$-algebra is a linear space $A$ equipped with polylinear operations $f: A^{\otimes n} \to A$, $f \in \Sigma$, $n=\nu(f)$. Denote by Alg $=$ {Alg$_\Sigma$} the class of all $\Sigma$-algebras, and let  {Alg$_\Sigma$}$\langle X \rangle$ be the free algebra generated by $X$.  A replicated language is defined as follows:
\[\Sigma^{(2)} = \{f_{i} \mid f \in \Sigma, i=1,\dots, \nu(f) \},~ \nu^{(2)}(f_{i})=\nu(f).\]
Let us assume that $\Sigma$ consists of one binary operation, then {Alg$_\Sigma$}$\langle X \rangle$  is a magmatic algebra and the replicated language $\Sigma^{(2)}$  is the set of binary operations $\{ \vdash_{i} , \dashv_{i}  \mid i \in I \}$.  We put  Alg$^{(2)}$ = Alg{$_\Sigma{^{(2)}}$}.
\\Let $\mathcal{V}{a}{r}$ be a variety of $\Sigma$-algebras satisfying a family of polylinear identities $S(\mathcal{V}{a}{r}) \subset ${Alg$_\Sigma$}$\langle X \rangle$.  We obtain a $\Sigma^{2}$-identity in  $S(\mathcal{V}{a}{r})$ by replacing all products with either $\dashv$ or $\vdash$ in such a way that all horizontal dashes point to a selected variable. 
To illustrate this, we have the following example. 

\begin{example}
Let $\lvert \Sigma \rvert=1$ and  $\mathcal{L}{i}{e}$ be the variety of Lie algebras. Then $\Sigma^{(2)}$ includes two binary operations $f_{1}=[x \dashv y]$ and $f_{2}=[x \vdash y]$.

\end{example}
In what follows,  we recall the concept of averaging operators which provides an equivalent definition for di-$\mathcal{V}{a}{r}$. 
\begin{definition}\cite{G2}
Suppose $A$ is a $\Sigma$-algebra. A linear map $t : A \to A $ is called an averaging operator on $A$ if 
\[ f(ta_{1}, \dots , ta_{n})=tf(ta_{1},\dots,ta_{i-1},a_{i},ta_{i+1},\dots,ta_{n})\]
for all $f \in \Sigma$, $ \nu(f)=n,$ $a_{j} \in A$, $ i,j=1,\dots,n$. The operator $t$ is called a homomorphic averaging operator if $f(ta_1,\dots,ta_n)=tf(a_{1}^H,\dots,a_{n}^H)$, where $H$ is in the collection of all nonempty subsets of $\{1,\dots,n\}$, and

\begin{align}
a_{i}^{H}=\left\{\begin{array}{lll}
                a_{i}  & i \in H \\
                ta_{i} & i \notin H.
            \end{array}\right.
\end{align}

\end{definition}
If $t$ is an averaging operator of the  $\Sigma$-algebra $A$, denote by $A^{(t)}$ the following $\Sigma^{(2)}$-algebra 
\[f^{H}(a_1,\dots,a_{n})=f(a_{1}^{H},\dots,a_{n}^{H}),\]
where $f \in \Sigma$, $\nu(f)=n$, $a_{i} \in A$ and $a_{i}^{H}$ are given by (2). Then the next theorem provides an equivalent definition of di-$\mathcal{V}{a}{r}$ by means of averaging operators.
\begin{theorem}\cite{G2}
Suppose $\nu(f) \geq 2$ for all $f \in \Sigma$.
\begin{itemize}
    \item[1.] If $A \in \mathcal{V}{a}{r}$ and $t$ is an averaging operator on $A$ then $A^{(t)}$ is a {\em di-}$\mathcal{V}{a}{r}$ algebra.
    \item[2.] Every $D \in $ \text{\em di-}$\mathcal{V}{a}{r}$ may be embedded into $A^{(t)}$ for an appropriate $A \in \mathcal{V}{a}{r}$ with an averaging operator $t$.
\end{itemize}
\end{theorem}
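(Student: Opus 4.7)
For statement (1), the plan is an inductive verification. Every polylinear defining identity of $\text{di-}\mathcal{V}ar$ arises from a polylinear identity $P(x_1,\ldots,x_n)=0$ of $\mathcal{V}ar$ by fixing a selected variable $x_j$ and converting each operation $f$ in each monomial of $P$ into its replicated copy $f_k$ pointing toward the subterm that contains $x_j$. Interpreting $f_k$ in $A^{(t)}$ as $f^{\{k\}}$, I would show by induction on the depth of a polylinear monomial $M$ that
\[ M^{(j)}(a_1,\ldots,a_n)=M(ta_1,\ldots,ta_{j-1},a_j,ta_{j+1},\ldots,ta_n) \]
when evaluated inside $A$. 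The base case $M=x_j$ is immediate, and the inductive step uses the definition $f^{\{k\}}(b_1,\ldots,b_m)=f(tb_1,\ldots,b_k,\ldots,tb_m)$ together with the averaging identity of the Definition above to commute $t$ with the inductive hypothesis applied to the $k$-th subterm (the one containing $x_j$). Summing the resulting identities over all monomials of $P$ converts $P^{(j)}(a_1,\ldots,a_n)=0$ into $P(ta_1,\ldots,a_j,\ldots,ta_n)=0$ in $A$, which holds because $A\in\mathcal{V}ar$.

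For statement (2), my plan is to construct a universal enveloping $\mathcal{V}ar$-algebra together with its averaging operator. Let $V=D\oplus tD$, where $tD$ is a formal isomorphic copy of $D$ via a linear bijection $t\colon D\to tD$, extended to $V$ by setting $t|_{tD}=0$. Take the free $\mathcal{V}ar$-algebra $F=\mathcal{V}ar\langle V\rangle$, and extend $t$ to a linear endomorphism of $F$ by imposing the averaging identities
\[ f(tb_1,\ldots,tb_n)-tf(tb_1,\ldots,b_i,\ldots,tb_n)=0 \]
for all $f\in\Sigma$, positions $i$, and $b_j\in F$. Quotient $F$ by the $\mathcal{V}ar$-ideal $I$ generated by these averaging identities together with the compatibility relations
\[ f^{\{i\}}(d_1,\ldots,d_n)-f_i^{D}(d_1,\ldots,d_n),\qquad f\in\Sigma,\ 1\le i\le \nu(f),\ d_j\in D, \]
where $f_i^{D}$ denotes the $i$-th replicated operation already carried by $D$. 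The quotient $A=F/I$ then lies in $\mathcal{V}ar$, carries the induced averaging operator $t$, and the composition $\iota\colon D\hookrightarrow V\hookrightarrow F\twoheadrightarrow A$ is by design a di-$\mathcal{V}ar$-morphism $D\to A^{(t)}$.

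The main obstacle will be proving injectivity of $\iota$. My plan is to exhibit a normal form for elements of $A$ in which the lifted basis of $D$ remains linearly independent. Fix a linear basis of $D$, choose a compatible degree-lexicographic order on monomials of $F$, and treat the generators of $I$ as a left-to-right rewriting system: the averaging relations push $t$ outward, and the compatibility relations reduce evaluations of the di-operations on basis elements of $D$ to their $\mathcal{V}ar$-counterparts. Confluence of this system, to be verified via a Composition--Diamond argument in the spirit of \cite{K1} and \cite{G2}, would produce the reduced monomials as a basis of $A$ in which $\iota(D)$ survives as an independent subspace, yielding the embedding. I expect the delicate step to be the resolution of all compositions between averaging relations, polylinear $\mathcal{V}ar$-identities, and the compatibility relations, which is where the hypothesis $\nu(f)\ge 2$ should be used in an essential way.
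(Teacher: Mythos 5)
The paper does not prove this theorem at all --- it is quoted from \cite{G2} --- so your proposal can only be measured against the standard argument (and against the closely related construction this paper does carry out in the proof of Theorem~\ref{theoremsubspace}). For part~1 your plan is fine: the defining identities of di-$\mathcal{V}ar$ are exactly the replicated polylinear identities of $\mathcal{V}ar$, and the induction reducing $M^{(j)}(a_1,\dots,a_n)$ to $M(ta_1,\dots,a_j,\dots,ta_n)$ via the averaging identity is the standard verification.

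For part~2, however, there is a genuine gap, and it sits exactly at the point that constitutes the whole content of the statement: the injectivity of $\iota$. You build $A$ as a quotient of a free $\mathcal{V}ar$-algebra by averaging and compatibility relations and then say you \emph{expect} a Composition--Diamond/confluence argument to show that $D$ survives; nothing in the proposal establishes this, and the presentation itself is not even well defined as written: you ``extend $t$ to a linear endomorphism of $F$ by imposing the averaging identities for all $b_j\in F$'', but those identities presuppose that $t$ is already defined on all of $F$ (one must instead work in the category of $\Sigma$-algebras with a distinguished linear operator, or define $t$ on a spanning set and prove consistency), and the choice $t|_{tD}=0$ is an extra constraint that the relations in $I$ could exploit to collapse part of $D$. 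The known proof avoids all of this by an explicit finite construction, the same one this paper uses inside the proof of Theorem~\ref{theoremsubspace}: given $D\in\text{di-}\mathcal{V}ar$, let $D_0$ be the span of all differences $f_i(d_1,\dots,d_n)-f_j(d_1,\dots,d_n)$, so that $\bar D=D/D_0$ lies in $\mathcal{V}ar$; put $A=\bar D\oplus D$ with the ``hemisemidirect'' $\mathcal{V}ar$-structure ($\bar a\,b=a\vdash b$, $a\,\bar b=a\dashv b$ in the binary case), define $t(\bar a + b)=\bar a+\bar b$ (or $t(\bar a+b)=\bar b$), check directly that $t$ is an averaging operator, and observe that $d\mapsto (0,d)$ is by construction an injective morphism of di-$\mathcal{V}ar$-algebras $D\to A^{(t)}$, since $[x\dashv y]=f(x,ty)$ etc.\ reproduce the original operations of $D$ on the second summand. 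So your route is not wrong in principle, but as it stands it defers the decisive step to an unverified Gr\"obner--Shirshov computation, whereas a short direct construction settles it; if you keep the universal-envelope approach you must actually carry out the rewriting/confluence analysis and fix the well-definedness of $t$ on $F$.
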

Let us consider the free Lie algebra $ Lie \langle X \cup \dot{X} \rangle$ generated by a given set $X$ and its copy $\dot{X}=\{\dot{X} \mid x \in X \}$  in the variety $\mathcal{L}{i}{e}$. There exists a unique homomorphism $\phi : Lie\langle X\cup \dot{X} \rangle \to Lie \langle X \rangle $ defined by $ x \mapsto x$, $ \dot{x} \mapsto x$, $ x \in X$. We define $Lie^{(2)} \langle X \cup \dot{X} \rangle$ with the following binary operations:
\[f \vdash g=\phi(f)g,~ f \dashv g=f\phi(g) \]
for $f,g \in Lie \langle X\cup \dot{X} \rangle$.

\begin{lemma}
The algebra  $Lie^{(2)} \langle X \cup \dot{X} \rangle$ belongs to di-$\mathcal{L}{i}{e}$.
\end{lemma}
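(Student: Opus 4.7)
The plan is to verify the lemma by exhibiting an averaging operator and invoking Theorem~1.4 (part~1). Specifically, view $\phi$ as an endomorphism of $Lie\langle X\cup\dot X\rangle$ by composing it with the natural inclusion $Lie\langle X\rangle\hookrightarrow Lie\langle X\cup\dot X\rangle$. Since $\phi$ fixes every $x\in X$, the composition satisfies $\phi\circ\phi=\phi$; that is, $\phi$ is an idempotent Lie-algebra endomorphism of $Lie\langle X\cup\dot X\rangle$.

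First, I would check that $\phi$ is an averaging operator on the free Lie algebra $A:=Lie\langle X\cup\dot X\rangle$. For $f,g\in A$, using that $\phi$ is a Lie homomorphism,
\[
\phi([\phi(f),g]) = [\phi\phi(f),\phi(g)] = [\phi(f),\phi(g)],
\qquad
\phi([f,\phi(g)]) = [\phi(f),\phi\phi(g)] = [\phi(f),\phi(g)].
\]
Thus $[\phi(f),\phi(g)]=\phi([\phi(f),g])=\phi([f,\phi(g)])$, which is exactly the averaging identity for the single binary operation $f\in\Sigma$ of the Lie language.

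Next I would identify the replicated algebra $A^{(\phi)}$ of Theorem~1.4 with $Lie^{(2)}\langle X\cup\dot X\rangle$. Unwinding the definition $f^{H}(a_{1},\dots,a_{n})=f(a_{1}^{H},\dots,a_{n}^{H})$ for the binary bracket and for $H=\{1\}$ and $H=\{2\}$ gives the two operations $[a,\phi(b)]$ and $[\phi(a),b]$, which coincide termwise with the operations $a\dashv b=a\,\phi(b)$ and $a\vdash b=\phi(a)\,b$ defining $Lie^{(2)}\langle X\cup\dot X\rangle$. Therefore $A^{(\phi)}$ and $Lie^{(2)}\langle X\cup\dot X\rangle$ have the same underlying vector space and the same $\Sigma^{(2)}$-operations.

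By Theorem~1.4(1), since $A\in\mathcal{L}{i}{e}$ and $\phi$ is an averaging operator on $A$, the algebra $A^{(\phi)}$ lies in di-$\mathcal{L}{i}{e}$, so the same holds for $Lie^{(2)}\langle X\cup\dot X\rangle$. There is no real obstacle; the only delicate point is making sure that the homomorphism/idempotence of $\phi$ really yields the averaging identity (rather than only a weaker one) and that the two binary operations $\vdash,\dashv$ introduced by the authors correspond precisely to the operations coming from the recipe $f^{H}$ for $H=\{1\},\{2\}$. Once these identifications are made explicit, the conclusion is immediate from the averaging theorem.
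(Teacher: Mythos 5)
Your proposal is correct and follows essentially the same route as the paper: both exploit the idempotence of $\phi$ together with its being a Lie homomorphism to verify the averaging identity, and then conclude via Theorem~1.4(1). Your version is simply a more explicit unwinding of the paper's one-line computation, including the identification of $A^{(\phi)}$ with $Lie^{(2)}\langle X\cup\dot X\rangle$, which the paper leaves implicit.
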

\begin{proof}
We have $\phi^{2}=\phi$, so
\[ \phi(\phi(f)g)=\phi(f\phi(g))=\phi(fg) \text{ for }  f,g \in Lie^{(2)} \langle X \cup \dot{X} \rangle,\]
which shows that $\phi$ is an averaging operator on $Lie^{(2)} \langle X \cup \dot{X} \rangle$. Thus $Lie^{(2)} \langle X \cup \dot{X} \rangle$ belongs to  di-$\mathcal{L}{i}{e}$.
\end{proof}
\begin{lemma}
The subalgebra $V$ of $Lie^{(2)} \langle X \cup \dot{X} \rangle$ generated by $\dot{X}$ coincides with the subspace $W$ of $Lie \langle X \cup \dot{X} \rangle$ spanned by all monomials $u$ such that the degree of $u$ with respect to the variables from $\dot{X}$ is not zero, i.e.  $deg_{\dot{X}} u >0$.
\end{lemma}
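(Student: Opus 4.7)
The plan is to prove both inclusions $V\subseteq W$ and $W\subseteq V$. For $V\subseteq W$, I would induct on the depth of an expression built from $\dot X$ using $\vdash$ and $\dashv$, tracking the $\dot X$-degree of every Lie-monomial summand. The base case is that each $\dot x\in\dot X$ has $\deg_{\dot X}\dot x=1$. For the step, suppose $u,v\in V$ have every summand of positive dot-degree. Writing the operations in terms of the underlying Lie bracket gives $u\vdash v=[\phi(u),v]$ and $u\dashv v=[u,\phi(v)]$. Since dot-degrees add under the Lie bracket and $\phi$ sends $\dot x\mapsto x$ (killing dots), every nonzero summand of $u\vdash v$ (resp.\ $u\dashv v$) inherits positive dot-degree from $v$ (resp.\ $u$). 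Hence $V\subseteq W$.

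For $W\subseteq V$ I would induct on the Lie length $n$ of a monomial $m\in W$. A preliminary used throughout is that the restriction $\phi|_V\colon V\to Lie\langle X\rangle$ is surjective: $\phi(\dot x)=x$, and if $a=[a_1,a_2]\in Lie\langle X\rangle$ has preimages $a_i'\in V$ by induction, then $\phi(a_1'\vdash a_2')=\phi(\phi(a_1')\cdot a_2')=a_1\cdot a_2=a$. The base $n=1$ gives $m=\dot x\in V$. For the step, write $m=[a,b]$ with $a,b$ of shorter length and consider the two easy subcases. If only $b$ has positive dot-degree, then $a\in Lie\langle X\rangle$; picking $a'\in V$ with $\phi(a')=a$ yields $m=[\phi(a'),b]=a'\vdash b$, and $b\in V$ by the inductive hypothesis. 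The case where only $a$ has positive dot-degree is symmetric via $\dashv$.

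The main obstacle is the remaining subcase, in which both $a$ and $b$ have positive dot-degree. Here neither $\vdash$ nor $\dashv$ produces $m$ on the nose, since both operations force one slot of the underlying bracket into $Lie\langle X\rangle$ and hence into dot-degree zero. The plan is to expand $[a,b]$ using the Jacobi identity and antisymmetry of the underlying free Lie algebra, rewriting it as a linear combination of Lie monomials in each of which all dotted generators sit on one side of the outer bracket, and then to apply the easy subcases already handled above. Carrying out this reduction, leveraging the averaging-operator identity $\phi(\phi(f)\cdot g)=\phi(f\cdot\phi(g))=\phi(f\cdot g)$ from the previous lemma together with a secondary induction on the total number of dots occurring in $m$, is the delicate combinatorial step I expect to absorb the bulk of the work.
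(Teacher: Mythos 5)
Your inclusion $V\subseteq W$ and your two ``easy'' subcases of $W\subseteq V$ are correct, and the surjectivity of $\phi|_{V}$ onto $Lie\langle X\rangle$ is exactly the right auxiliary fact. The genuine problem is the case you defer to the end: it cannot be repaired. The free Lie algebra $Lie\langle X\cup\dot{X}\rangle$ is graded by multidegree in the generators, and antisymmetry and the Jacobi identity are multihomogeneous, so no rewriting can change $\deg_{\dot{X}}$ of a homogeneous component. On the other hand, your own induction for $V\subseteq W$ proves more than you claim: since $u\vdash v=[\phi(u),v]$ and $u\dashv v=[u,\phi(v)]$, and $\phi$ annihilates all dots in one slot, every monomial occurring in an element of $V$ has $\deg_{\dot{X}}$ \emph{exactly} $1$, not merely $>0$. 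Hence a monomial such as $[[\dot{x},x],\dot{x}]$ (or $[\dot{x},\dot{y}]$ for $x\neq y$) lies in $W$ but cannot lie in $V$, so the inclusion $W\subseteq V$ fails with $W$ as defined. The obstacle you flagged in the ``both sides dotted'' case is not a delicate combinatorial step to be absorbed later; it is a true obstruction.

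What this reveals is that the defect lies in the statement rather than in your execution. The lemma transcribes Kolesnikov's tri-algebra result, where a third operation $f\perp g=fg$ is available and does generate monomials of arbitrary positive $\deg_{\dot{X}}$; in the di-setting, with only $\vdash$ and $\dashv$, the correct assertion is that $V$ coincides with the span of the monomials $u$ with $\deg_{\dot{X}}u=1$. (This is consistent with the example following the lemma, whose proposed linear basis of di-Lie$(X)$ consists of words containing exactly one dotted letter, and with Loday's realization of free di-algebras.) With $W$ so corrected, your hard case disappears --- in $m=[a,b]$ exactly one of $a,b$ carries the dot --- and your two easy subcases, together with the surjectivity of $\phi|_{V}$, complete the induction. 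The paper's own proof handles $V\subseteq W$ by closure under the operations and dismisses the converse with ``proved in the same way,'' thereby glossing over precisely the point you isolated.
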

\begin{proof}
Let $u,v \in V$ and assume $u,v \in W$. Then $u \vdash v$ and $u \dashv v$ belong to $W$ and $V \subset W$. The converse $W \subset V$ is proved in the same way.
\end{proof}
The next theorem has been proved in the case of tri-algebras in \cite{K1}. We restate the theorem in terms of free Leibniz algebras in the variety $\text{di-}\mathcal{L}{i}{e}$.
\begin{theorem}\label{theoremsubspace}
The subalgebra $V$ of $Lie^{(2)} \langle X \cup \dot{X} \rangle$ is isomorphic to the free Leibniz algebra in the variety di-$\mathcal{L}{i}{e}$ generated by $X$. 
\end{theorem}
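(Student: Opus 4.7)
The plan is to establish that $V$ has the universal property of the free di-Lie algebra on $X$. By the previous lemma $V$ coincides with the subspace $W$ of monomials of positive $\dot{X}$-degree, and it is a di-Lie subalgebra of $Lie^{(2)}\langle X \cup \dot{X}\rangle$ containing $\dot{X}$; hence the universal property of $F := \text{di-}\mathcal{L}{i}{e}\langle X\rangle$ supplies a canonical di-Lie homomorphism $\pi : F \to V$ sending $x \mapsto \dot{x}$, which is surjective since $\dot{X}$ generates $V$. It remains to exhibit, for any di-Lie algebra $L$ and any set map $g : X \to L$, a di-Lie homomorphism $\tilde{g} : V \to L$ with $\tilde{g}(\dot{x}) = g(x)$; uniqueness on $\dot{X}$ will then yield an inverse to $\pi$.

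Given $L$ and $g$, I would invoke the embedding theorem to realize $L \hookrightarrow A^{(t)}$ for some Lie algebra $A$ carrying an averaging operator $t$, and then extend the assignment $x \mapsto t\,g(x)$, $\dot{x} \mapsto g(x)$ on generators to a Lie algebra homomorphism $\Psi : Lie\langle X \cup \dot{X}\rangle \to A$ by freeness. The technical heart of the argument is the identity
\[
\Psi(\phi(u)) \;=\; t\,\Psi(u) \qquad \text{for every } u \in V,
\]
which I would prove by induction on the di-Lie complexity of $u$. The base case $u = \dot{x}$ is immediate. For $u = v \vdash w$ with $v, w \in V$, $\phi^{2} = \phi$ gives $\phi(u) = \phi(v)\phi(w)$, and applying $\Psi$ and the inductive hypothesis reduces the claim to $[t\Psi(v), t\Psi(w)] = t[t\Psi(v), \Psi(w)]$, which is exactly an averaging-operator axiom; the case $u = v \dashv w$ is symmetric and invokes the companion axiom $[ta, tb] = t[a, tb]$.

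Once this identity is in hand, for $u, v \in V$ one computes
\[
\Psi(u \vdash v) \;=\; \Psi(\phi(u)\,v) \;=\; [\Psi(\phi(u)), \Psi(v)] \;=\; [t\Psi(u), \Psi(v)] \;=\; \Psi(u) \vdash \Psi(v)
\]
in $A^{(t)}$, and analogously for $\dashv$, so $\Psi$ respects the di-Lie operations on $V$. A parallel induction shows that $\Psi(V) \subseteq L$, and $\tilde{g} := \Psi|_{V}$ is the required di-Lie homomorphism; uniqueness is clear because $V$ is generated by $\dot{X}$ on which $\tilde{g}$ is forced to equal $g$. Combined with the canonical surjection $\pi$, this gives the isomorphism.

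The main obstacle I expect is the inductive identity $\Psi \circ \phi = t \circ \Psi$ on $V$: one must anchor the induction at the generators $\dot{X}$ rather than at all of $X \cup \dot{X}$, otherwise one is forced to assume idempotency of the abstract operator $t$, which is not guaranteed by the embedding theorem. A secondary care-point is aligning the positions of $\phi$ in the definitions $u \vdash v = \phi(u)v$ and $u \dashv v = u\phi(v)$ with the two halves of the averaging axiom so that the induction actually closes.
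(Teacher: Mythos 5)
Your proof is correct and rests on the same overall strategy as the paper's: establish the universal property of $V$ in di-$\mathcal{L}{i}{e}$ and conclude freeness. The difference is in how the target algebra (your $L$, the paper's $A$) is placed inside a Lie algebra so that $Lie\langle X\cup\dot X\rangle$ can be mapped to it. The paper builds that Lie algebra explicitly: it forms $\bar A=A/A_0$ with $A_0=\Span\{a\vdash b-a\dashv b\}$, sets $\hat A=\bar A\oplus A$ with $\bar a b=a\vdash b$, $a\bar b=a\dashv b$, sends $x\mapsto\overline{\alpha(x)}$, $\dot x\mapsto\alpha(x)$, and obtains a Lie homomorphism $\hat\psi$; the printed proof stops there, leaving the restriction to $V$ and the check that it respects $\vdash,\dashv$ implicit. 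You instead invoke the averaging-operator embedding $L\hookrightarrow A^{(t)}$ (item 2 of the theorem quoted from \cite{G2}), define $\Psi$ by $x\mapsto t\,g(x)$, $\dot x\mapsto g(x)$, and prove $\Psi\circ\phi=t\circ\Psi$ on $V$ by induction; this is exactly the verification the paper omits, and your reduction of the inductive step to the two halves of the averaging axiom $[ta,tb]=t[ta,b]=t[a,tb]$, matching $u\vdash v=\phi(u)v$ with $a\vdash b=[ta,b]$ and $u\dashv v=u\phi(v)$ with $a\dashv b=[a,tb]$, is the right bookkeeping. The two routes are really the same mechanism, since $\hat A$ equipped with $t(a+\bar b)=\bar a+\bar b$ is an averaging-operator model in which $\overline{\alpha(x)}$ plays the role of your $t\,g(x)$; your version trades the explicit construction for a citation but makes the di-homomorphism property of $\Psi|_V$ and the containment $\Psi(V)\subseteq L$ explicit, while the paper's version is self-contained. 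Your caution about anchoring the induction at $\dot X$ (so as not to need idempotency of $t$) is exactly right, since by the preceding lemma $V$ consists of the monomials of positive $\dot X$-degree.
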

\begin{proof}
We prove the universal property of $V$ in the class di-$\mathcal{L}{i}{e}$. Suppose $A$ is an arbitrary Leibniz algebra in di-$\mathcal{L}{i}{e}$, and let $\alpha: X \to A$ be an arbitrary map. We construct a homomorphism of Leibniz algebras $\chi :V \to A$ such that $\chi(\dot{x})=\alpha(x)$ for all $x \in X$. The subspace 
\[ A_{0}=\text{Span}\{a \vdash b - a \dashv b \mid a,b \in A \} \]
is an ideal of $A$. The quotient $\bar{A}=A/A_{0}$ has the structure of a Lie algebra. Consider $\hat{A}=\bar{A}\oplus A$ with the product:
\[ \bar{a}b=a\vdash b, ~ a\bar{b}=a \dashv b, \]
for $\bar{a},\bar{b} \in \bar{A}$, where $\bar{c}=c+A_{0} \in \bar{A},$ $c \in A$. Then $\hat{A} \in \mathcal{L}{i}{e}$. We recall that the Hadamard tensor product $\mathcal{P}{e}{r}{m} \otimes \mathcal{L}{i}{e}$ leads to Leibniz algebras in the variety di-$\mathcal{L}{i}{e}$. 
We construct $\hat{\alpha}: X \cup \dot{X} \to \hat{A}$ as ${\hat{\alpha}}(x)=\overline{\alpha x} \in \bar{A},$ $\hat{\alpha}( \dot{x})=\alpha(x) \in A$ for $x \in X$. The map $\hat{\alpha}$ induces a homomorphism of Lie algebras $\hat{\psi}: Lie \langle X \cup \dot{X} \rangle \to \hat{A}$. 

\end{proof}

\begin{example}
Suppose that $X$ is a linearly ordered set and extend the order to $X \cup \dot{X}$ as $$x > y \Rightarrow \dot{x} > \dot{y}, ~ \dot{x}> y$$ for all $x,y \in X$. All words of the form $[\cdots[[\dot{x_{i_{1}}}x_{i_{2}}]x_{i_{3}}]\cdots x_{i_{n}}]$ are linearly independent in $Lie \langle X \cup \dot{X} \rangle$. These words correspond to 
\[ [\cdots [[x_{i_{1}} \dashv x_{i_{2}}]\dashv x_{i_{3}}]\cdots \dashv x_{i_{n}}] \in \text{di-Lie}(X) ,\]
where $[~ \dashv ~]$ satisfies the right Leibniz identity. Therefore, the words of this form are a linear basis of di-Lie$(X)$.
 
\end{example}

\section{CD-Lemma for Lie algebras}
In this section, we briefly recall the main concepts of Gr\"obner-Shirshov bases theory for Lie algebras referring to \cite{B5}. Let $X$ be an ordered set and denote by $X^{*}$ and $X^{**}$ the set of all associative and non-associative words on $X$, respectively. We also consider a monomial ordering on $X^{*}$. A standard example of monomial ordering on $X^{*}$ is the \textit{deg-lex} ordering. An associative Lyndon-Shirshov word is a word greater than any cyclic permutation of itself. A non-associative Lyndon-Shirshov word is obtained by a unique standard bracketing and defined as follows. 
\begin{definition}
Let $[u]$ be a non-associative word. Then $[u]$ is called a non-associative Lyndon-Shirshov word if
\begin{enumerate}
    \item $u$ is an associative Lyndon-Shirshov word.
    \item if $[u]=[[v][w]]$, then both $[v]$ and $[w]$ are non-associative Lyndon-Shirshov words.
    \item in (2) if $[v]=[[v_{1}][v_{2}]]$, then $v_{2} \leq w$ in $X^{*}$.
\end{enumerate}
\end{definition}
Given a Lie polynomial $f \in Lie \langle X \rangle$, we can express it as a linear combination of non-associative Lyndon-Shirshov words. It is obvious that the leading term, $\bar{f}$ is an associative Lyndon-Shirshov word. 

\begin{definition}
For two monic Lie polynomials $f$ and $g$ in  $Lie \langle X \rangle$, their composition is denoted by $(f,g)_{w}$ and defined as follows:
\begin{itemize}
    \item Let $w=\bar{f}a=b \bar{g}$, then $(f,g)_{w}=[fb]_{\bar{f}} - [ag]_{\bar{g}}$  such that $deg(\bar{f})+ deg(\bar{g}) > deg(w)$ is called the intersection composition.
    \item Let $w=\bar{f}=a\bar{g}b$, then the composition $(f,g)_{w}=f - [agb]_{\bar{g}}$ is called the inclusion composition. 
\end{itemize}
where $a,b \in X^{*}$. The notations $[fb]_{\bar{f}}$, $[ag]_{\bar{g}}$ and $[agb]_{\bar{g}}$ imply a special bracketing with respect to the subword $\bar{f}$ or ${\bar{g}}$. 
\end{definition}
Let $S \subset Lie \langle X \rangle$ be a set of monic Lie polynomials, the composition $(f,g)_{w}$  is called trivial modulo $S$ if 
\[(f,g)_{w}=\sum \alpha_{i}a_{i}s_{i}b_{i} \]
where $\alpha_{i} \in k$, $a_{i},b_{i} \in X^{*}$ and $s_{i} \in S$ with $a_{i}\bar{s_{i}}b_{i} < w$. 
\begin{definition}
The set $S \subset Lie \langle X \rangle$ is called Gr\"obner-Shirshov basis if any composition of polynomials from $S$ is trivial modulo $S$.
\end{definition}
The following lemma is called \textit{Composition-Diamond Lemma} for  Lie algebras.
\begin{lemma}(CD-Lemma for Lie algebras, \cite{B5})\label{cdlemma}
Let $S \subset Lie (X) \subset k \langle X \rangle$ be nonempty set of monic Lie polynomials. Let $Id(S)$ be the ideal of $Lie(X)$ generated by $S$. Then the following statements are equivalent.
\begin{enumerate}
    \item  $S$ is a Gr\"obner-Shirshov basis in $Lie(X).$
    \item  $f \in Id(S) \Rightarrow \bar{f}=a\bar{s}b,~ \text{for some}~ s\in S~ \text{and}~ a,b \in X^{*}.$
    \item $Irr(S)=\{[u] \mid [u]~ \text{is a non-associative LS-word},~ u \neq a\bar{s}b, s \in S, a,b \in X^{*} \}$ is a basis for $Lie (X|S)$.
\end{enumerate}
\end{lemma}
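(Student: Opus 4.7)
The plan is to prove the three implications in the cycle $(1) \Rightarrow (2) \Rightarrow (3) \Rightarrow (1)$, with the central technical device being Shirshov's special bracketing. Recall that if $u$ is an associative Lyndon--Shirshov word and $\bar s$ is a subword of $u$ for some monic $s \in Lie\langle X\rangle$, then one can produce a nonassociative bracketing $[asb]_{\bar s}$ whose leading associative word is $u$ and which equals $\alpha asb + (\text{lower})$ for a suitable scalar $\alpha$. This Shirshov lemma is the workhorse of the whole argument, and I would state and use it freely.

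For $(1) \Rightarrow (2)$, I would take $f \in Id(S)$ and write it as a finite sum $f = \sum_i \alpha_i [a_i s_i b_i]_{\bar{s_i}}$ via Shirshov's lemma. Setting $w_i = a_i \bar{s_i} b_i$, put $w = \max_i w_i$. If exactly one $i$ achieves the maximum, then $\bar f = w = a_i \bar{s_i} b_i$ and we are done. Otherwise there is cancellation among the leading words of the top-degree summands, which either is an intersection or an inclusion between two elements of $S$ and therefore produces a composition $(s_i,s_j)_{w}$. Because $S$ is a Gröbner--Shirshov basis, this composition rewrites as a combination of bracketed $[a s b]_{\bar s}$ terms whose leading words are strictly below $w$; substituting back reduces the number of maximal-word summands, and well-founded induction on $w$ (under the deg-lex order) finishes the step.

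For $(2) \Rightarrow (3)$, the spanning claim follows by the usual rewriting: given any nonassociative LS-word $[u]$ with $u = a \bar s b$, Shirshov's lemma lets us replace $[u]$ modulo $Id(S)$ by a combination of nonassociative LS-words with strictly smaller leading associative word, and iterating this we reduce any $f \in Lie\langle X\rangle$ to a $k$-linear combination of elements of $Irr(S)$ modulo $Id(S)$. For linear independence in the quotient, suppose $\sum \beta_j [u_j] \in Id(S)$ with $[u_j] \in Irr(S)$ and not all $\beta_j$ zero; taking the maximal $u_j$ gives the leading word of this element, which by (2) must be of the form $a \bar s b$, contradicting $[u_j] \in Irr(S)$.

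For $(3) \Rightarrow (1)$, I argue by contrapositive: if some composition $(f,g)_w$ is nontrivial modulo $S$, then after reducing it by subtracting elements $\alpha a s b$ with $a \bar s b < w$ one lands on a nonzero element of $Id(S)$ whose expansion in nonassociative LS-words involves a basis word $[u]$ with $u$ not of the form $a \bar s b$; this $[u]$ then belongs to $Irr(S)$, so (3) forces $[u]$ to be nonzero in $Lie(X \mid S)$, contradicting that it lies in $Id(S)$. The main obstacle, and the step I would spend the most care on, is $(1) \Rightarrow (2)$: the delicate point is the bookkeeping of Shirshov bracketings under cancellation, since one must verify that the composition produced is literally the intersection or inclusion composition as defined (with the correct bracketing), so that the GS-basis hypothesis applies verbatim. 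Everything else is a clean induction on the monomial order.
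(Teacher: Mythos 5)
The paper itself gives no proof of this lemma: it is quoted verbatim from Bokut's survey \cite{B5}, so the only meaningful comparison is with the standard argument in the literature, which is exactly the cyclic scheme $(1)\Rightarrow(2)\Rightarrow(3)\Rightarrow(1)$ built on Shirshov's special bracketing that you outline. Your implications $(2)\Rightarrow(3)$ and $(3)\Rightarrow(1)$ are the standard ones and are fine as sketched.

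There is, however, one genuine gap in your $(1)\Rightarrow(2)$. After writing $f=\sum_i\alpha_i[a_is_ib_i]_{\bar{s_i}}$ and taking $w=\max_i a_i\bar{s_i}b_i$, you assert that when two or more summands attain the maximum, the cancellation ``either is an intersection or an inclusion between two elements of $S$.'' That is false as stated: if $w=a_i\bar{s_i}b_i=a_j\bar{s_j}b_j$, the two occurrences of $\bar{s_i}$ and $\bar{s_j}$ inside $w$ may be \emph{disjoint} (neither overlapping nor nested), in which case no composition is formed and the Gr\"obner--Shirshov hypothesis gives you nothing to apply. This case must be treated by a separate (easy but unavoidable) lemma showing that for disjoint occurrences the difference $\alpha_i[a_is_ib_i]_{\bar{s_i}}-\beta[a_js_jb_j]_{\bar{s_j}}$ (suitably normalized so the leading terms cancel) can be rewritten directly as a combination of terms $[asb]_{\bar{s}}$ with $a\bar{s}b<w$, essentially by substituting one relation into the tail of the other. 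Without this case the induction on $w$ does not close. A second, smaller point: your very first step silently uses the fact that an arbitrary element of $Id(S)$ --- a priori a linear combination of iterated Lie brackets involving elements of $S$ --- can be rewritten as a sum of special-bracketed terms $[a_is_ib_i]_{\bar{s_i}}$ with $a_i\bar{s_i}b_i$ an associative Lyndon--Shirshov word; this also rests on Shirshov's bracketing lemma and deserves an explicit statement. With these two repairs your outline matches the classical proof in \cite{B5}.
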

In order to determine a Gr\"obner-Shirshov basis of an ideal of a Leibniz algebra $L \in \text{di-}\mathcal{L}{i}{e}$,  generated by $X$ with the set of relations $S$, we rewrite the relations from $S$ as elements of $Lie \langle X \cup \dot{X} \rangle$ and find a Gr\"obner-Shirshov basis of  the ideal $I=(S \cup \phi(S))$  in $Lie \langle X \cup \dot{X} \rangle$. The following theorem in \cite{K1} is useful for the translation of a Gr\"obner-Shirshov basis from the variety $\mathcal{L}{i}{e}$ to $\mathcal{L}{e}{i}{b}$. 

\begin{theorem}
Let $S \subset V \subset Lie^{(2)} \langle X \cup \dot{X} \rangle$. Then $(S)^{(2)}=(S \cup \phi(S)) \cap V$, where $(S)$ stands for the ideal of $Lie^{(2)} \langle X \cup \dot{X} \rangle$ generated by $S$.
\end{theorem}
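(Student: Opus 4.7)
The plan is to prove the equality $(S)^{(2)} = (S \cup \phi(S)) \cap V$ by two inclusions, exploiting the defining identities $u \vdash v = \phi(u) v$ and $u \dashv v = u \phi(v)$ (where juxtaposition denotes the Lie bracket of $L := Lie\langle X \cup \dot{X}\rangle$), together with the $\dot{X}$-grading of $L$. For the inclusion $(S)^{(2)} \subseteq (S \cup \phi(S)) \cap V$, I would unfold any di-monomial of $(S)^{(2)}$ using the two identities above to express it as a Lie polynomial in which some $s \in S$ or $\phi(s) \in \phi(S)$ appears as a factor; this places the element inside the Lie ideal $(S \cup \phi(S))$. Containment in $V$ is automatic since $S \subseteq V$ and $V$ is closed under $\vdash$ and $\dashv$.

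The reverse inclusion is the substantial direction. Given $f \in (S \cup \phi(S)) \cap V$, I would write $f$ as a linear combination of right-normed Lie brackets $[u_1,[u_2,\ldots,[u_n,t]\cdots]]$ with $t \in S \cup \phi(S)$ and $u_i \in L$, then use the $\dot{X}$-grading to retain only those components compatible with $f \in V$. Two cases arise: either $t = s \in S$ and all $u_i$ lie in $Lie\langle X \rangle = \mathrm{im}(\phi)$, or $t = \phi(s)$ and exactly one $u_j$ carries the $\dot{X}$-content while the remaining $u_i$ lie in $Lie\langle X \rangle$. In each case I would rewrite every Lie bracket as a di-product using the identity $[u_i,a] = v_i \vdash a - a \dashv v_i$, valid whenever $u_i \in Lie\langle X\rangle$ and $v_i \in V$ is any lift with $\phi(v_i) = u_i$ (such a lift exists because $\phi|_V : V \to Lie\langle X\rangle$ is surjective); at the innermost step of the second case one uses the identity $[u_j,\phi(s)] = u_j \dashv s - s \vdash u_j$ with $u_j, s \in V$. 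Applied inductively from the innermost bracket outward, this converts each right-normed bracket into a di-polynomial in $S$, belonging to $(S)^{(2)}$.

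The main obstacle will be the bookkeeping of these rewritings: each step must produce a di-product both of whose factors lie in $V$, so that the result genuinely belongs to $(S)^{(2)}$ rather than stepping out into the larger ideal $(S)$ of $Lie^{(2)}\langle X \cup \dot{X}\rangle$. The surjectivity of $\phi|_V$ is precisely the tool that resolves this, since substituting a $V$-lift $v_i$ in place of $u_i \in Lie\langle X\rangle$ does not alter the values of the di-products $v_i \vdash a = \phi(v_i) a = u_i a$ or $a \dashv v_i = a \phi(v_i) = a u_i$. This rewriting argument is the di-algebra counterpart of the tri-algebra proof given by Kolesnikov in \cite{K1}.
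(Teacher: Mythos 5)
Your overall route (two inclusions, the $\dot X$-grading to isolate the part of the ideal lying in $V$, then rewriting Lie brackets back into di-products via lifts along $\phi|_V$) is viable and is organized differently from the paper, which instead runs an induction on filtrations $I_s$ of $(S\cup\phi(S))$ and $J_s$ of $(S)^{(2)}$ to prove $I_s=J_s+\phi(J_s)$ and then intersects with $V$. Your forward inclusion is fine (note it really uses $\phi(I)\subseteq I$, since unfolding $v\dashv j=v\,\phi(j)$ applies $\phi$ to the nested factor), and your two-case split is correct, but only because $V$ consists of elements of $\dot X$-degree \emph{exactly} one; with the paper's Lemma stating merely $\deg_{\dot X}>0$ the split would fail, so you should make that characterization explicit. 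Also, your displayed identities are off by a factor of $2$: since juxtaposition is the Lie bracket, $v_i\vdash a-a\dashv v_i=\phi(v_i)a-a\phi(v_i)=2[u_i,a]$; the clean identities are $[u_i,a]=v_i\vdash a=-\,a\dashv v_i$ and $[u_j,\phi(s)]=u_j\dashv s=-\,s\vdash u_j$.

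The genuine gap is in the second case of the reverse inclusion when the unique entry $u_j\in V$ is \emph{not} adjacent to the generator $\phi(s)$. Working from the innermost bracket outward, the chunks $[u_n,\phi(s)],\ [u_{n-1},[u_n,\phi(s)]],\dots$ (for indices beyond $j$) are $\dot X$-degree-zero elements of $Lie\langle X\rangle$; they cannot be rewritten as di-products lying in $(S)^{(2)}$ (which sits inside $V$, in degree one), and replacing $u_i$ by an arbitrary lift $v_i\in V$ does not help: to stay inside the ideal you need the \emph{inner} factor of each di-product to lie in $(S)^{(2)}$, not merely in $V$, and surjectivity of $\phi|_V$ gives you no control over that. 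The repair is to lift inside the ideal: set $p=v_{j+1}\vdash(\cdots(v_n\vdash s)\cdots)\in(S)^{(2)}$, which is exactly your case-(a) rewriting applied to $s$ in place of $\phi(s)$, so that the inner degree-zero chunk equals $\phi(p)$; then $[u_j,\phi(p)]=u_j\,\phi(p)=u_j\dashv p\in V\dashv(S)^{(2)}\subseteq(S)^{(2)}$, and the remaining outer entries $u_{j-1},\dots,u_1\in Lie\langle X\rangle$ are absorbed as $[u_i,q]=v_i\vdash q\in(S)^{(2)}$. With this extra observation (the degree-zero inner part lies in $\phi((S)^{(2)})$ with an explicit preimage), your argument closes and gives a legitimate alternative to the paper's filtration induction.
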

\begin{proof}
Let  $ I= (S \cup \phi(S))$. We have 
\[ I=\cup_{s \geq 0} I_{s}, ~ I_{0} \subset I_{1} \subset \cdots ,\]
where $I_{0}=span(S \cup \phi(S)),$ and \[I_{s+1}=I_{s}+Lie \langle X \cup \dot{X} \rangle I_{s} + I_{s} Lie \langle X \cup \dot{X} \rangle.\]
We also consider $J=(S)^{(2)}$. Then
\[J=\cup_{s \geq 0} J_{s},~ J_{0} \subset J_{1} \subset \cdots ,\]
where $J_{0}=span(S),$ and
\[J_{s+1}=J_{s}+ V \vdash J_{s} + J_{s}\dashv V + V \dashv J_{s} + J_{s} \vdash V+VJ_{s}+J_{s}V \text{.}\]
Since $ S \subset V$, $\phi(S) \subset Lie (X)$, and $Lie(X) \cap V =0$, we have $I_{0} \cap V=J_{0}$. Moreover, $I_{0}=J_{0}+I'_{0},$ where $I'_{0}=\text{span}\phi(S)=\phi(J_{0})\subset Lie(X)$. Suppose $I_{s}=J_{s}+\phi(J_{s})$ for some $s \geq 0$. We have $Lie \langle X \cup \dot{X} \rangle=V+Lie(X)$ and $Lie(X)=\phi(V)$. Then
\begin{align*}
    I_{s+1}&=I_{s}+(V+Lie(X))I_{s}+I_{s}(V+Lie(X))\\
           &=J_{s}+\phi(J_{s})+VJ_{s}+Lie(X)J_{s}+V\phi(J_{s})+Lie(X)\phi(J_{s})\\
           &+J_{s}V+\phi(J_{s})V+\phi(J_{s})V+\phi(J_{s})Lie(X).
\end{align*}
We note that 
\[ V\phi(J_{s})=V \dashv J_{s}, ~ Lie(X)J_{s}=V \dashv J_{s} ,\]
\[ \phi(J_{s})V=J_{s} \vdash V, ~ J_{s}Lie(X)=J_{s} \vdash V ,\]
and $VJ_{s}+J_{s}V=0$. Therefore
\begin{align*}
    I_{s+1}&=J_{s+1}+\phi(J_{s})+\phi(J_{s})Lie(X)+Lie(X)\phi(J_{s})\\
           &=J_{s+1}+\phi(J_{s+1}).
\end{align*}
We have shown that $I_{s}=J_{s}+\phi(J_{s})$ for all $s \geq 0$, then $I_{s}\cap V=J_{s}$ which implies $I \cap V=J$.
\end{proof}

\begin{corollary}\label{corollary1}
di-$Lie\langle X \mid S \rangle$ is isomorphic to the subalgebra of $({Lie \langle X \cup \dot{X} \rangle} / I)^{(2)}$ generated by $\dot{X}$.
\end{corollary}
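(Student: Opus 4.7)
The plan is to combine Theorem \ref{theoremsubspace} (which identifies $V\subset Lie^{(2)}\langle X\cup\dot X\rangle$ with the free di-Lie algebra on $X$, sending $x\mapsto \dot x$) with the preceding theorem, which gives $(S)^{(2)}=(S\cup\phi(S))\cap V= I\cap V$, and then apply the standard second-isomorphism theorem. Concretely, di-$Lie\langle X\mid S\rangle$ is by definition $V/(S)^{(2)}$ (Theorem \ref{theoremsubspace}), and the second isomorphism theorem for $\Sigma^{(2)}$-algebras yields $V/(I\cap V)\cong (V+I)/I$ as a subalgebra of $Lie^{(2)}\langle X\cup\dot X\rangle/I$. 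Since $V$ is generated by $\dot X$, so is its image in the quotient.

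Next I would check that the di-Lie structure on $(V+I)/I$ induced as a subalgebra of $Lie^{(2)}\langle X\cup\dot X\rangle/I$ agrees with the one obtained by viewing the Lie quotient $Lie\langle X\cup\dot X\rangle/I$ with its natural averaging operator and then restricting to the subalgebra generated by $\dot X$. For this, note that the ideal $I=(S\cup\phi(S))$ is $\phi$-stable (it is generated by a $\phi$-invariant set, and $\phi$ is a Lie-algebra homomorphism), so $\phi$ descends to a well-defined linear map $\bar\phi$ on $Lie\langle X\cup\dot X\rangle/I$, still satisfying $\bar\phi^{2}=\bar\phi$. Hence $\bar\phi$ is an averaging operator on the Lie quotient, and the associated $\Sigma^{(2)}$-structure (as in the paragraph preceding Theorem 1.5) is precisely $(Lie\langle X\cup\dot X\rangle/I)^{(2)}$ in the sense used in the corollary. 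The operations $f\vdash g=\bar\phi(f)g$ and $f\dashv g=f\bar\phi(g)$ on the quotient agree with the images of the corresponding operations on $V$, so the inclusion $(V+I)/I\hookrightarrow (Lie\langle X\cup\dot X\rangle/I)^{(2)}$ is a di-Lie homomorphism.

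Assembling the steps in order: first, use Theorem \ref{theoremsubspace} to rewrite di-$Lie\langle X\mid S\rangle = V/(S)^{(2)}$; second, invoke the preceding theorem to replace $(S)^{(2)}$ by $I\cap V$; third, apply the isomorphism $V/(I\cap V)\cong (V+I)/I$; fourth, verify that the di-Lie structure transported through this isomorphism is exactly the one inherited from the ambient $(Lie\langle X\cup\dot X\rangle/I)^{(2)}$, using that $I$ is $\phi$-stable so the averaging operator descends cleanly; finally observe that since $V$ is generated by $\dot X$, the image $(V+I)/I$ is the subalgebra of $(Lie\langle X\cup\dot X\rangle/I)^{(2)}$ generated by (the images of) $\dot X$.

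The main obstacle is the compatibility check in step four: one must confirm that taking the quotient commutes with forming the $(\cdot)^{(2)}$-structure, i.e., that $\phi$ descending to $\bar\phi$ on $Lie\langle X\cup\dot X\rangle/I$ exists and produces the same $\vdash,\dashv$ operations as on $V$ modulo $I\cap V$. Once this is in place, both the algebraic isomorphism and the identification of the generating set are immediate from the two theorems already available, so the corollary follows without further computation.
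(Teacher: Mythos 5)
Your argument is correct and is essentially the route the paper intends: the corollary is stated as an immediate consequence of Theorem \ref{theoremsubspace} (identifying $V$ with the free di-Lie algebra on $X$) and the preceding theorem ($(S)^{(2)}=I\cap V$), combined with the isomorphism $V/(I\cap V)\cong (V+I)/I$. Your extra check that $\phi(I)\subseteq I$, so that $\phi$ descends and the quotient's $(\cdot)^{(2)}$-structure restricts compatibly to the image of $V$, is exactly the point the paper leaves implicit, and it goes through as you describe.
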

\section{Replication of HNN-Extension}
The initial approach in~\cite{L2} for constructing HNN-extensions of Leibniz algebras is based on the construction of HNN-extensions in the case of di-algebras, which are closely related to Leibniz algebras, just like associative algebras are related to Lie algebras. The results were transferred to Leibniz algebras by the PBW theorem. In this section, we introduce HNN-extensions of Leibniz algebras through  a replication procedure based on operads. The benefit in doing so is that we find an  explicit linear basis for HNN-extensions of Leibniz algebras.

Let $L$ be a right Leibniz algebra over a field $\mathbb{K}$ with some bilinear product $[-,-]$ satisfying the Leibniz identity
\[ [[x,y],z]=[[x,z],y]+[x,[y,z]].\]
The HNN-extension of Leibniz algebras is defined as \[L_{d}^{*}=\langle L,t \mid  [a,t]=d(a), ~\text{for} ~\text{all} ~a\in A\rangle ,\] where $d$ is a derivation map defined on the subalgebra $A$ instead of the whole algebra $L$. Also, Ladra \emph{et.~al}\cite{L2} defined HNN-extensions of Leibniz algebras corresponding to an anti-derivation map in order to prove that every Leibniz algebra of at most countable dimension is embeddable in a two-generator Leibniz algebra. 

We denote the generating sets of $L$ and $A$ by $X$ and $B$, respectively. Arbitrary elements of $X$ will be denoted by $x$, $y$, $z$, elements of $B$
by $a$, $b$. We recall that the replication of the skew-symmetric and the Jacobi identities yields the following identities in the variety of di-$\mathcal{L}{i}{e}$:
\begin{equation}
    [x \vdash y]+[y \dashv x] =0
\end{equation} 
and 
\begin{equation}\label{relations1}
    [[x \dashv y]\dashv z]-[x \dashv [y \dashv z]]-[[x \dashv z]\dashv y]=0.
\end{equation}
The latter equation is the right Leibniz identity. Therefore, a di-Lie algebra in the variety di-$\mathcal{L}{i}{e}$ is considered as a left Leibniz algebra with respect to $\vdash$ or a right Leibniz algebra with respect to $\dashv$. Derivations and anti-derivations of di-Lie algebras are defined as linear maps $d:L \to L$ and $d^{\prime}: L \to L$ satisfying 
\[ d([x\dashv y])=[d(x)\dashv y]+[x\dashv d(y)],\]
\[ d^{\prime}([x\dashv y])=[d^{\prime}(x)\dashv y]-[d^{\prime}(y) \dashv x],\]
\text{for all}~ $x,y \in L$. Therefore right multiplication is a derivation, whereas left multiplication is an anti-derivation.

We consider the following presentation for HNN-extensions of a Lie di-algebra in the variety di-$\mathcal{L}{i}{e}$ with operations $[a \dashv b] = [ab]$ and $[b \vdash a] = -[ba]$ and denote it by $H$. We have
\begin{align}\label{hnn}
    H = \langle X,t  \mid & [x\dashv y], [y \dashv x], 
     [x \dashv x], ~\text{for all} ~x,y \in X \\
     &[t\dashv a]-d^{\prime}(a), [a \dashv t]-d(a) ~ \text{and} ~a \in A\rangle, \nonumber
\end{align}
where $d$ and $d^{\prime}$ are derivation and anti-derivation maps, respectively, defined on the subalgebra $A$.
Let $H_{0}=span \{[x \vdash y] -[x \dashv y] \mid x,y \in L\}$ with basis $X_{0}$. We consider $X \cup \dot{X} \cup \{t, \dot{t}\}$, where $\dot{X}$ is a copy of $X$ and define the ordering $\dot{x}> \dot{y} > x>y$ and $\dot{t} > t$ such that $\{t, \dot{t}\} > X \cup \dot{X}$. Then we consider $S$ as the set of the following polynomials:
\begin{enumerate}
    \item $ \dot{f_{1}}(x,y)=[\dot{x} y]- {\dot{\mu}}_{\dashv}(x,y)$
    \item $\dot{f_{2}}(y,x)=[\dot{y}x]-{\dot{\mu}}_{\dashv}(y,x)$
    \item $\dot{f_{3}}(x,x)=[\dot{x} x]-{\dot{\mu}}_{\dashv}(x,x)$
    \item $\dot{g}(a,t)=[\dot{a} t]-{\dot{\mu}}_{\dashv}(a,t)$
    \item $\dot{h}(t,a)=[\dot{t} a]-{\dot{\mu}}_{\dashv}(t,a)$
\end{enumerate}
where by $\mu: X \times X \to X$ we denote the multiplication table of a Lie algebra which is a linear form in $X$ for all $x,y \in X$. We have $\phi(\dot{f_{1}})=[xy]-\mu_{\dashv}(x,y) $, $\phi(\dot{f_{2}})=[xy]+\mu_{\dashv}(y,x)$, 
$\phi(\dot{f_{3}})=\mu_{\dashv}(x,x)$, 
$\phi(\dot{h})=[ta]-\mu_{\dashv}(t,a)$ and $\phi(\dot{g})=[ta]+\mu_{\dashv}(a,t)$
such that $x,y,t,a \in X \setminus X_{0}$
and consider
\begin{equation*}
    S \cup \phi(S)=\{\dot{f_{1}},\dot{f_{2}}, \dot{f_{3}}, \dot{h}, \dot{g}, \phi(\dot{f_{1}}),\phi(\dot{f_{2}}), \phi(\dot{f_{3}}), \phi(\dot{h}), \phi(\dot{g}) \}.
\end{equation*}
The elements of $\phi(S)$ have the inclusion compositions ${\mu}_{\dashv}(x,y) + {\mu}_{\dashv}(y,x)$, ${\mu}_{\dashv}(x,x)$ and ${\mu}_{\dashv}(t,a)+ {\mu}_{\dashv}(a,t)$. Since the linear space spanned by these compositions coincides with $H_{0}$,  we add letters $X_{0}$ to $S \cup \phi(S)$. Moreover, by $\mu_{\dashv}(X, X_{0} ) = 0$ and $\mu_{\dashv}(X_{0},X) \subset X_{0}$ we can reduce $S \cup \phi(S)$ to the following set
\begin{equation}\label{gsb1}
     S \cup \phi(S)=\{\dot{f_{1}},\dot{f_{2}}, \dot{f_{3}}, \dot{h}, \dot{g}, \phi(\dot{f_{1}}), \phi(\dot{f_{3}}), \phi(\dot{h})\}.
\end{equation}
In the next theorem, we implement Kolesnikov's approach in order to compute a Gr\"obner-Shirshov basis of $H$.
\begin{theorem}
The relations in $S \cup \phi(S) \cup X_{0}$ in \ref{gsb1} form a Gr\"obner-Shirshov basis for $H$.
\end{theorem}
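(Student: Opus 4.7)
The plan is to verify the hypotheses of the CD-Lemma for Lie algebras (Lemma~\ref{cdlemma}) for the set $S \cup \phi(S) \cup X_0$: namely, that every intersection and inclusion composition among pairs of monic Lie polynomials in this set is trivial modulo the set. Granted this, the theorem then combines with Corollary~\ref{corollary1}, which identifies $H$ with the subalgebra generated by $\dot{X}\cup\{\dot{t}\}$ in the replicated quotient, to yield the claim.

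First, I would dispose of the compositions among the undotted (Lie-side) relations $\phi(\dot{f_1}), \phi(\dot{f_3}), \phi(\dot{h})$. Compositions of $\phi(\dot{f_1})$ and $\phi(\dot{f_3})$ with themselves, having leading terms in $X^{**}$, are the classical reductions certifying that the multiplication table of a Lie algebra together with skew-symmetry forms a Gr\"obner--Shirshov basis on its generators; the essential input is the Jacobi identity in $Lie\langle X\rangle$. The compositions of $\phi(\dot{h})=[ta]-\mu_\dashv(t,a)$ with $\phi(\dot{f_1})$ and of $\phi(\dot{h})$ with itself occur at overlaps of the form $[[ta_1]a_2]$ and $[t[a_1a_2]]$ with $a_1,a_2\in B$. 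The triviality of these compositions is exactly the statement that $d$ is a derivation on the subalgebra $A$, together with (when $\dot{g}$ enters by passing through the skew-symmetry relation) the fact that $d'$ is an anti-derivation. The elements of $X_0$ being letters, they admit only trivial compositions, and the reductions $\mu_\dashv(X,X_0)=0$ and $\mu_\dashv(X_0,X)\subset X_0$ already encoded in the text ensure that no residual overlaps are introduced.

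Second, for the dotted relations $\dot{f_1},\dot{f_2},\dot{f_3},\dot{g},\dot{h}$, I would exploit the fact, established before Theorem~\ref{theoremsubspace}, that $\phi$ is both an algebra homomorphism and an averaging operator on $Lie^{(2)}\langle X\cup\dot{X}\rangle$. Each leading term of a dotted relation contains exactly one dotted letter, so an overlap of a dotted polynomial $\dot{r}$ with any other element of $S\cup\phi(S)\cup X_0$ sits inside a Lyndon--Shirshov word with a single dotted variable. Applying $\phi$ to the corresponding composition returns the analogous composition of $\phi(\dot{r})$ with the $\phi$-image of the other element, which is trivial by the first step; the difference from this $\phi$-image lies in the subspace $V$ of Lemma~2 and is rewritten using only the dotted relations themselves. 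In this way every dotted composition reduces to its undotted counterpart.

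The main obstacle will be the bookkeeping of non-associative bracketings: the composition $(f,g)_w$ requires a distinguished special bracketing $[fb]_{\bar f}$ respecting $\bar f$ as a subword, and one must enumerate all Lyndon--Shirshov words $w$ at which overlaps can occur between members of $S\cup\phi(S)\cup X_0$, especially between $\dot{h}, \dot{g}$ and $\phi(\dot{f_1})$. I expect the key technical step to be the explicit reduction at shared leading words of the form $[[\dot{t}a_1]a_2]$ and $[\dot{t}[a_1a_2]]$; it is precisely here that the Leibniz-rule for $d$ and the anti-derivation rule for $d'$ on $A$ are invoked, and it is exactly these identities that render the HNN-presentation (\ref{hnn}) consistent in the first place.
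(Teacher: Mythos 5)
Your overall skeleton matches the paper's: identify all intersection and inclusion compositions among $S\cup\phi(S)\cup X_{0}$, check they are trivial, and invoke the CD-Lemma~\ref{cdlemma}; your first step (the purely undotted compositions encode the multiplication table of $\bar H=H/H_{0}$ together with the derivation property of $d$, and are trivial by the Jacobi identity) is exactly how the paper opens its proof, and you correctly locate where the derivation and anti-derivation hypotheses enter.

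The gap is in your second step. You replace the verification of the mixed compositions --- $(\dot{f_1},\phi(\dot{f_1}))_{w}$, $(\dot{f_1},\phi(\dot{f_2}))_{w}$, $(\dot{f_3},\phi(\dot{f_1}))_{w}$, $(\dot{h},\phi(\dot{f_1}))_{w}$, $(\dot{g},\phi(\dot{h}))_{w}$ and their companions --- by the assertion that applying $\phi$ turns each such composition into its undotted counterpart, and that ``the difference lies in $V$ and is rewritten using only the dotted relations.'' But triviality modulo the set is not mere membership in the ideal: it requires a representation $(f,g)_{w}=\sum\alpha_{i}a_{i}s_{i}b_{i}$ with $a_{i}\bar{s_{i}}b_{i}<w$. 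Knowing that $\phi\bigl((f,g)_{w}\bigr)$ admits such a representation below $\phi(w)$ does not by itself produce one for $(f,g)_{w}$ below $w$: one must choose a lift of each term $a_{i}\phi(s_{i})b_{i}$ (i.e.\ decide which letter carries the dot after the rewriting has permuted the monomials), check that the lifted leading words stay below $w$, and control the discrepancy between the lifted expression and the actual composition. Carrying this out in each case is precisely the content of the paper's explicit reductions (i)--(v), e.g.\ the computation showing $(\dot{g},\phi(\dot{h}))_{w}$ with $w=\dot{a}ta$ collapses via $\dot{\mu}_{\dashv}(\dot{\mu}_{\dashv}(a,a),t)+\dot{\mu}_{\dashv}(a,\mu_{\dashv}(t,a))+\dot{\mu}_{\dashv}(a,\dot{\mu}_{\dashv}(a,t))=0$; note this last case, which mixes $d$ and $d'$ through $\dot g$ and $\phi(\dot h)$, is not covered by the overlaps $[[\dot t a_{1}]a_{2}]$ you single out as the key technical step. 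Either prove the $\phi$-lifting principle as a standalone lemma (with the leading-word bookkeeping) or do the computations directly; as written, the central step of the proof is asserted rather than established.
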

\begin{proof}
The relations $[xy]-\mu_{\dashv}(x,y)$ for $x,y \in X \setminus X_{0}$, $x>y$ and $[ta]-\mu_{\dashv}(t,a)$ for $t,a \in X \setminus X_{0}$, $t>a$ correspond to the multiplication table of the Lie algebra $\bar{H}=H/H_{0}$ and their intersection compositions are trivial. Considering \ref{relations1} and the relations between multiplications of a Lie algebra, we compute other possible compositions as follows:
\begin{itemize}
    \item [(i)] $\dot{f_1}=\dot{x}y-y\dot{x}-\dot{\mu}_{\dashv}(x,y)$, $\phi(\dot{f_1})=yz-zy-\mu_{\dashv}(y,z)$, $w=\dot{x}yz$, where $y,z \in X\setminus X_{0}$, $y>z$ and $x \in X$
    \begin{align*}
        (\dot{f_1},\phi(\dot{f_1}))_{w}&=\dot{f_1} z-\dot{x}\phi(\dot{f_1})=-y\dot{x}z-{\dot{\mu}_{\dashv}}(x,y)z + \dot{x}zy + \dot{x}\mu_{\dashv} (y,z)\\
        &\equiv -yz\dot{x}-y{\dot{\mu}}_{\dashv}(x,z)-{\dot{\mu}}_{\dashv}(x,y)z + z\dot{x}y+ {\dot{\mu}_{\dashv}}(x,z)y+ \dot{x}{\dot{\mu}}_{\dashv}(y,z)\\
        & \equiv -zy\dot{x} -\mu_{\dashv}(y,z)\dot{x}-y{\dot{\mu}}_{\dashv}(x,z) - {\dot{\mu}}_{\dashv}(x,y)z +zy\dot{x}+z {\dot{\mu}}_{\dashv}(x,y)\\
        &+{\dot{\mu}}_{\dashv}(x,z)y+\dot{x}{\mu}_{\dashv}(y,z)\\
        &={\dot{\mu}}_{\dashv}({\dot{\mu}}_{\dashv}(x,z),y)+{\dot{\mu}}_{\dashv}(\dot{x},{{\mu}}_{\dashv}(y,z))+{\dot{\mu}}_{\dashv}(z,{\dot{\mu}}_{\dashv}(x,y))=0
    \end{align*}
    \item [(ii)] $\dot{f_1}=\dot{x}y-y\dot{x}-\dot{\mu}_{\dashv}(x,y)$, $\phi(\dot{f_2})=yz-zy+\mu_{\dashv}(z,y)$, $w=\dot{x}yz$, where $y,z \in X\setminus X_{0}$, $y>z$ and $x \in X$
    \begin{align*}
        (\dot{f_1},\phi(\dot{f_2}))_{w}&=\dot{f_1} z-\dot{x}\phi(\dot{f_2})=-y\dot{x}z-{\dot{\mu}_{\dashv}}(x,y)z + \dot{x}zy - \dot{x}\mu_{\dashv} (z,y)\\
        &\equiv -\dot{x}yz+{\dot{\mu}}_{\dashv}(x,y)z - {\dot{\mu}_{\dashv}}(x,y)z+ \dot{x}zy -\dot{x}{{\mu}}_{\dashv}(z,y)=0
    \end{align*}
    \item [(iii)] $\dot{f_3}=\dot{x}x-x\dot{x}-\dot{\mu}_{\dashv}(x,y)$, $\phi(\dot{f_1})=xy-yx-\mu_{\dashv}(x,y)$, $w=\dot{x}xy$, where $x,y \in X\setminus X_{0}$, $x>y$
    \begin{align*}
        (\dot{f_3},\phi(\dot{f_1}))_{w}&=\dot{f_3} y-\dot{x}\phi(\dot{f_1})=-x\dot{x}y-{\dot{\mu}_{\dashv}}(x,x)y + \dot{x}yx + \dot{x}\mu_{\dashv} (x,y)\\
        &\equiv -xy\dot{x}-x{\dot{\mu}}_{\dashv}(x,y)-{\dot{\mu}}_{\dashv}(x,x)y + y\dot{x}x+ {\dot{\mu}_{\dashv}}(x,y)x+ \dot{x}{\dot{\mu}}_{\dashv}(x,y)\\
        & \equiv (yx-xy)\dot{x} -x{\dot{\mu}}_{\dashv}(x,y)-{\dot{\mu}}_{\dashv}(x,x)y + y{\dot{\mu}_{\dashv}}(x,x)+ {\dot{\mu}_{\dashv}}(x,y)x+ \dot{x}{\dot{\mu}}_{\dashv}(x,y)\\
        &={\dot{\mu}}_{\dashv}(y,{\mu}_{\dashv}(x,x))+{\dot{\mu}}_{\dashv}(\dot{x},{{\mu}}_{\dashv}(x,y))-{\dot{\mu}}_{\dashv}({\mu}_{\dashv}(x,y),\dot{x})=0
    \end{align*}
    
    \item [(iv)] $\dot{h}=\dot{t}a-a\dot{t}-\dot{\mu}_{\dashv}(t,a)$, $\phi(\dot{f_1})=ab-ba-\mu_{\dashv}(a,b)$, $w=\dot{t}ab$, where $a,b \in X\setminus X_{0}$, $a>b$
    \begin{align*}
        (\dot{h},\phi(\dot{f_1}))_{w}&=\dot{h}b-\dot{t}\phi(\dot{f_{1}})=-a\dot{t}b-{\dot{\mu}_{\dashv}}(t,a)b + \dot{t}ba + \dot{t}\mu_{\dashv} (a,b)\\
        &\equiv -ab\dot{t} -  a\dot{\mu}_{\dashv} (t,b) - \dot{\mu}_{\dashv} (t,a)b + b\dot{t}a + \dot{\mu}_{\dashv} (t,b)a + \dot{t} {\mu}_{\dashv}(a,b) \\
        & \equiv -ab\dot{t} -  a\dot{\mu}_{\dashv} (t,b) - \dot{\mu}_{\dashv} (t,a)b + ba\dot{t} + b {\dot{\mu}}_{\dashv} (t,a)+ \dot{\mu}_{\dashv} (t,b)a + \dot{t} {\mu}_{\dashv}(a,b) \\
        &=\dot{\mu}_{\dashv}(\dot{\mu}_{\dashv}(t,b),a)+\dot{\mu}_{\dashv}(b,\dot{\mu}_{\dashv}(t,a))-\dot{\mu}_{\dashv}(t,{\mu}_{\dashv}(a,b))=0
    \end{align*}
    
    \item [(v)] $\dot{g}=\dot{a}t-t\dot{a}-\dot{\mu}_{\dashv}(a,t)$, $\phi(\dot{h})=ta-at-\mu_{\dashv}(t,a)$, $w=\dot{a}ta$, where $a \in X\setminus X_{0}$, $t>a$
    \begin{align*}
        (\dot{g},\phi(\dot{h}))_{w}&=\dot{g}a-\dot{a}\phi(\dot{h})=-t\dot{a}a-{\dot{\mu}_{\dashv}}(a,t)a + \dot{a}at + \dot{a}\mu_{\dashv} (t,a)\\
        &\equiv -ta\dot{a} -t\dot{\mu}_{\dashv} (a,a) - \dot{\mu}_{\dashv} (a,t)a + a\dot{a}t + \dot{\mu}_{\dashv} (a,a)t + \dot{a} {\mu}_{\dashv}(t,a) \\
        & \equiv -at\dot{a} -{\mu}_{\dashv}(t,a)\dot{a} + t\dot{\mu}_{\dashv}(a,a) - \dot{\mu}_{\dashv} (a,t)a+ a\dot{a}t + \dot{\mu}_{\dashv} (a,a)t + \dot{a}\dot{\mu}_{\dashv}(t,a)\\
        &\equiv  -{\mu}_{\dashv}(t,a)\dot{a} + t\dot{\mu}_{\dashv}(a,a) - \dot{\mu}_{\dashv} (a,t)a + a \dot{\mu}_{\dashv}(a,t) + \dot{\mu}_{\dashv} (a,a)t + \dot{a}\dot{\mu}_{\dashv}(t,a)\\
        &=\dot{\mu}_{\dashv}(\dot{\mu}_{\dashv}(a,a),t)+\dot{\mu}_{\dashv}(a,{\mu}_{\dashv}(t,a)+\dot{\mu}_{\dashv}(a,\dot{\mu}_{\dashv}(a,t))=0
    \end{align*}

\end{itemize}

    There is no composition between elements of $\{ \dot{f_{1}},\dot{f_{2}},\dot{f_{3}},\dot{h}, \dot{g} \}$. We denote, for example, $(a \wedge b)$ the composition of the polynomials of type $(a)$
and type $(b)$. The intersection compositions $\dot{f_{2}} \wedge \phi(\dot{f_1})$, $\dot{f_{2}} \wedge \phi(\dot{f_2})$, $\dot{f_{3}} \wedge \phi(\dot{f_2})$, $\dot{h} \wedge \phi(\dot{f_2})$ and $\dot{g} \wedge \phi(\dot{g})$ are trivial modulo $S \cup \phi(S) \cup X_{0}$ similar to the cases $(i)$, $(ii)$, $(iii)$, $(iv)$ and $(v)$, respectively. Also, by  straightforward computation we observe that the inclusion compositions $\dot{f_{1}} \wedge \phi(\dot{f_3})$, $\dot{f_{2}} \wedge \phi(\dot{f_3})$, $\dot{f_{3}} \wedge \phi(\dot{f_3})$ and $\dot{h} \wedge \phi(\dot{f_3})$ are trivial modulo $S \cup \phi(S) \cup X_{0}$.
\end{proof}
From compositions of the presentation (\ref{hnn}) and from the CD-Lemma \ref{cdlemma}, we get a normal form for the elements of the HNN-extension.
\begin{corollary}
A linear basis for $H$ is given by all the Lyndon-Shirshov words on $X \cup \dot{X} \cup \{t,\dot{t} \}$ which do not contain subwords from the set $X_0$ or of the form $xy$ for $x,y \in X \setminus X_0$ and $x>y$, $ta$ for $a \in B$, $\dot{x}y$ and $\dot{x}x$ for $x,y \in X$ and $\dot{t}a$ for $a\in B$.
\end{corollary}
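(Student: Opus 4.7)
The plan is to combine the Gröbner--Shirshov basis just established with the CD-Lemma and the identification of the Leibniz algebra $H$ as a subalgebra of a replicated Lie quotient. First I would invoke the preceding theorem: the set $S \cup \phi(S) \cup X_0$ displayed in \eqref{gsb1} is a Gröbner--Shirshov basis of the ideal $I$ generated by the HNN-relations inside $\mathrm{Lie}\langle X \cup \dot X \cup \{t,\dot t\}\rangle$. By Lemma~\ref{cdlemma}, a linear basis for the quotient $\mathrm{Lie}\langle X \cup \dot X \cup \{t,\dot t\}\rangle/I$ is then given by $\mathrm{Irr}(S\cup\phi(S)\cup X_0)$, namely the non-associative Lyndon--Shirshov words whose underlying associative word contains no leading monomial $a\bar s b$ with $s$ in the basis.

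Next I would identify the leading monomials under the chosen deg-lex order (recall $\dot x>\dot y>x>y$ and $\dot t>t>X\cup\dot X$). The leading words of the five dotted generators are $\dot x y$, $\dot y x$ (with $x>y$), $\dot x x$, $\dot a t$ and $\dot t a$; the leading words of the elements of $\phi(S)$ are $xy$ for $x>y$, $xx$ and $ta$; and the elements of $X_0$ are themselves linear ``letters'' that are forbidden as subwords. After noting that $\dot x y$ subsumes the role of $\dot y x$ once the skew-symmetry modulo $X_0$ is accounted for, and that $\dot a t$ collapses into $\dot t a$ modulo $X_0$ in the same way, the list of forbidden subwords reduces exactly to $X_0$, $xy$ (for $x,y\in X\setminus X_0$, $x>y$), $ta$ (for $a\in B$), $\dot x y$ and $\dot x x$ (for $x,y\in X$), and $\dot t a$ (for $a\in B$).

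Finally, I would invoke Corollary~\ref{corollary1}, which realises $H$ as the subalgebra of $(\mathrm{Lie}\langle X \cup \dot X \cup \{t,\dot t\}\rangle/I)^{(2)}$ generated by $\dot X \cup \{\dot t\}$. By Theorem~\ref{theoremsubspace} this subalgebra is linearly spanned by those Lyndon--Shirshov words in which at least one dotted letter occurs (the di-Lie reading of the undotted-dotted distinction), so restricting the irreducible set above to such words yields the desired basis of $H$.

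The main obstacle I anticipate is the cosmetic but necessary bookkeeping in the second step: making sure that the leading-term list reduces to the exact forbidden set stated in the corollary. In particular, the identification of $\dot y x$ with $\dot x y$ and of $\dot a t$ with $\dot t a$ modulo the additional letters $X_0$ must be justified carefully, since it is what eliminates some a priori expected forbidden subwords from the final list; once this reduction is in hand the rest of the argument is a direct transcription of the CD-Lemma output into the language of the Leibniz HNN-extension.
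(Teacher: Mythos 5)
Your overall route is the same as the paper's: the paper obtains this corollary in one line by feeding the Gr\"obner--Shirshov basis $S\cup\phi(S)\cup X_{0}$ of (\ref{gsb1}) into part (3) of the CD-Lemma \ref{cdlemma}, so that the irreducible Lyndon--Shirshov words (those avoiding the leading monomials of the basis elements as subwords) give the linear basis. Your first and third steps are a faithful, more detailed transcription of this, and your final restriction to words of positive dotted degree (via Theorem \ref{theoremsubspace} and Corollary \ref{corollary1}) is a reasonable sharpening that the paper's statement leaves implicit.

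The gap is in your second step, exactly where you anticipated trouble. The forbidden-subword condition delivered by the CD-Lemma is purely combinatorial: one must exclude the leading \emph{associative words} of the basis elements, and such conditions cannot be merged ``modulo $X_{0}$''. For the pair $\dot{x}y$, $\dot{y}x$ this is harmless but also unnecessary: the corollary's clause ``$\dot{x}y$ for $x,y\in X$'' imposes no order on $x,y$, so it already contains both leading words of $\dot{f_{1}}$ and $\dot{f_{2}}$, and no skew-symmetry argument is needed. For $\dot{g}=[\dot{a}t]-\dot{\mu}_{\dashv}(a,t)$, however, the move genuinely fails: its leading word (namely $t\dot{a}$ under the declared order $\{t,\dot{t}\}>X\cup\dot{X}$, or $\dot{a}t$ if one ranks dotted letters on top, as the paper's composition (v) tacitly does) is a different associative word from $\dot{t}a$, and it is rewritten by a different relation ($d$ rather than $d^{\prime}$). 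Concretely, the Lyndon--Shirshov word $t\dot{a}$ contains none of the subwords listed in the corollary, yet in the quotient it equals $-\dot{\mu}_{\dashv}(a,t)$, a combination of dotted letters, so it cannot belong to a linear basis together with the letters of $\dot{X}$. Hence either the forbidden list must be enlarged by the leading word of $\dot{g}$ (the honest output of the CD-Lemma), or a separate argument is required; ``collapsing $\dot{a}t$ into $\dot{t}a$ modulo $X_{0}$'' is not such an argument. (A smaller slip: $\phi(\dot{f_{3}})=\mu_{\dashv}(x,x)$ is linear, so its leading term is a letter reducible through $X_{0}$, not the word $xx$.)
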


\begin{corollary}
The isomorphic copy of the Lie di-algebra $\dot{L}$ is embedded in $H$.
\end{corollary}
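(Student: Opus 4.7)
The plan is to identify $\dot{L}$ with the subalgebra of $H$ generated by $\dot{X}$ (under the di-operations $\dashv,\vdash$) and then use the explicit linear basis from the preceding corollary to check that this inclusion is injective. By Theorem \ref{theoremsubspace} and Corollary \ref{corollary1}, a concrete realization of $\dot{L}$ is the subalgebra generated by $\dot{X}$ inside $(Lie\langle X\cup\dot{X}\rangle/I_L)^{(2)}$, where $I_L$ is the ideal generated by a defining set $S_L$ of Lie relations for $L$ together with their $\phi$-images. So I would begin by fixing such a presentation and a Gr\"obner--Shirshov basis for $\dot{L}$ built from the Lie-type relations $\dot{f_1},\dot{f_2},\dot{f_3}$ together with $\phi(\dot{f_1}),\phi(\dot{f_3})$ plus $X_0$, exactly as in (\ref{gsb1}) but with the HNN pair $\dot{g},\dot{h},\phi(\dot{h})$ deleted.

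The second step is to read off the normal form. The preceding theorem establishes a Gr\"obner--Shirshov basis for $H$ whose leading terms are precisely the subwords $xy$, $\dot{x}y$, $\dot{x}x$, $ta$, $\dot{t}a$, $\dot{a}t$ (plus elements of $X_0$). I would then intersect the basis for $H$ supplied by the preceding corollary with the Lyndon--Shirshov words on $X\cup\dot{X}$ (i.e., those containing neither $t$ nor $\dot{t}$). The forbidden subwords involving $t$ or $\dot{t}$ never occur in such words, so what remains are exactly the Lyndon--Shirshov words in $X\cup\dot{X}$ that avoid $X_0$ and the subwords $xy$ (for $x>y$), $\dot{x}y$, $\dot{x}x$. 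By the CD-Lemma \ref{cdlemma} applied to the Lie presentation of $\dot{L}$ chosen above, this is precisely a linear basis for $\dot{L}$.

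It then follows that the natural map $\iota\colon\dot{L}\to H$, $\dot{x}\mapsto\dot{x}$, carries distinct normal-form monomials of $\dot{L}$ to distinct normal-form monomials of $H$, and hence is injective, giving the desired embedding.

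The point that requires the most care, and is really the core of the argument, is that the HNN-specific reductions never collapse any element of $\dot{L}$. This is exactly what the preceding theorem secures: all compositions between the HNN relations $\dot{g},\dot{h},\phi(\dot{h})$ and the Lie-type relations are trivial modulo $S\cup\phi(S)\cup X_0$, so any element of the defining ideal of $H$ whose leading word avoids $t$ and $\dot{t}$ already lies in the ideal defining $\dot{L}$. Once this is in hand, the injectivity of $\iota$ is immediate from comparing the two normal forms.
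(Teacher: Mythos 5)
Your argument is correct and takes essentially the same route as the paper: the paper's entire proof is the one-line observation that all elements of $\dot{X}$ are words in normal form, i.e.\ it rests on the same normal-form comparison coming from the Gr\"obner--Shirshov basis. Your proposal simply spells this out in full, making explicit the key point (which the paper leaves implicit) that the HNN relations have leading words containing $t$ or $\dot{t}$ and therefore never reduce a Lyndon--Shirshov word on $X\cup\dot{X}$, so the normal forms of $\dot{L}$ inject into those of $H$.
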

\begin{proof}
All the elements of $\dot{X}$ are words in normal form.
\end{proof}


\begin{thebibliography}{9}

\bibitem{B1} D.~W.~Barnes, \emph{On Levi's theorem for Leibniz algebras}, {Bull.~Austral.~Math.~Soc.}, 86(2), 184--185 (2012)

\bibitem{B2} A.~Bloh, \emph{A generalization of the concept of a Lie algebra}, Sov.~Math.~Dokl.~6, 1450--1452 (1965)

\bibitem{B3} L.~A.~Bokut, \emph{Imbeddings into simple associative algebras} (in Russian), Algebra i Logika 15(2), 117--142 (1976) 

\bibitem{B4} L.~Bokut, Y.~Chen, K.~Kalorkoti, P.~S.~Kolesnikov, V.~E.~Lopatkin, 
\emph{Gr\"obner-Shirshov bases; Normal Forms, Combinatorial and Decision Problems in Algebra}, World Scientific Publishing Company, (2020)

\bibitem{B5} L.~A.~Bokut, \emph{Gr\"obner-Shirshov bases for Lie algebras: After A.~I.~Shirshov}, Southeast Asian Bulletin of Mathematics 31, 1057--1076 (2007)




Contemp.~Math., vol.~623, Amer.~Math.~Soc., Providence, RI, 41--54 (2014)


\bibitem{G2} V.~Yu.~Gubarev, P.~S.~Kolesnikov, \emph{Operads of decorated trees and their duals}, Commentat.~Math.~Univ.~Carolin.~55(4), 421--445 (2014)

\bibitem{G3} V.~Yu.~Gubarev, P.~S.~Kolesnikov. \emph{On the computation of Manin products for operads},  arXiv:1204.0894v1 [math.RA]

\bibitem{H1} G.~Higman, B.~H.~Neumann, H.~Neumann, \emph{Embedding theorems for groups}, J.~London.~Math.~Soc.~ 24, 247--254 (1949) 

\bibitem{K1} P.~S.~Kolesnikov, \emph{Gr\"obner-Shirshov bases for replicated algebras}, Algebra Colloquium 24(4), 563--576 (2017) 


\bibitem{L1} M.~Ladra, P.~P\'aez-Guill\'an, C.~Zargeh, \emph{HNN-extension of Lie superalgebras}, Bull.~Malays.~Math.~Sci.~Soc. 43, 959--1970 (2020)

\bibitem{L2} M.~Ladra, M.~Shahryari, C.~Zargeh, \emph{HNN-extension of Leibniz algebras}, J.~Algebra 532, 183--200 (2019)

\bibitem{L3} A.~I.~Lichtman, M.~Shirvani, \emph{HNN-extensions for Lie algebras}, Proc.~AMS.~125(12), 3501--3508 (1997)

\bibitem{L4} J.-L, Loday, \emph{Une Version non commutative des algebras de Lie: les algebras de Leibniz}, Enseign.~Math.~39, 269--293 (1993)

\bibitem{L5} J.-L. Loday, \emph{Cyclic homology}, Grundl.~Math.~Wiss.~301, Springer-Verlag, Berlin, (1992)


\bibitem{L7} J.-L. Loday, B.~Vallette, \emph{Algebraic Operads}, Grundlehren Math.~Wiss.~346, Springer, Heidelberg (2012)

\bibitem{S1} A.~I. Shirshov. \emph{Some algorithmic problems for Lie algebras}, Sibirsk.~Mat.~Zh.~3, (1962), 292--296 (in Russian). English translation: SIGSAM Bull.~33, 3--6 (1999)

\bibitem{S2} A.~I.~Shirshov, \emph{Selected works of A.~I.~Shirshov. Translated by Murray Bremner and Mikhail V.~Kotchetov},  L.~A.~Bokut, V.~Latyshev, I.~Shestakov, E.~Zelmanov (Eds), {Contemporary Mathematicians}, Birkhaeuser Verlag, Basel (2009)

\bibitem{S3} S.~Silvestrov, C.~Zargeh, \emph{HNN-extension of involutive multiplicative Hom-Lie algebras}, arXiv:2101.01319 [math.RA]

\bibitem{W1} A.~Wasserman, \emph{A derivation HNN construction for Lie algebras}, Israel J.~Math.~106, 76--92 (1998)

\bibitem{Z1} C.~Zargeh, \emph{Existentially closed Leibniz algebras and an embedding theorem}, Commun.~Math.~29(2), 163--170 (2021)

\bibitem{Z2} G.W. Zinbiel, Encyclopedia of types of algebras 2010, Nankai Ser. Pure Appl. Math. Theoret. Phys., 9 (2012), 217 − 297.


\end{thebibliography}
\end{document}